\newtheorem{lemma}{Lemma}[section]
\newtheorem{theorem}[lemma]{Theorem}
\newtheorem{definition}[lemma]{Definition}
\newtheorem{corollary}[lemma]{Corollary}
\newtheorem{proposition}[lemma]{Proposition}
\newcommand{\C}{\mathbb{C}}
\newcommand{\F}{\mathbb{F}}
\renewcommand{\P}{\mathbb{P}}
\newcommand{\Q}{\mathbb{Q}}
\newcommand{\R}{\mathbb{R}}
\newcommand{\Z}{\mathbb{Z}}
\newcommand{\CP}{\C\P}
\renewcommand{\o}{\otimes}
\newcommand{\Om}{\Omega}
\newcommand{\im}{\mathrm{im}}
\newcommand{\Ext}{\mathrm{Ext}}
\newcommand{\cals}{\mathcal{S}}
\newcommand{\HL}{\mathrm{H}\mathrm{L}}
\newcommand{\Ad}{\mathrm{A}\mathrm{d}}
\def\commenter#1{}
\newcommand{\depth}{\mathrm{depth}\,}
\newcommand{\ds}{\bigoplus}
\newcommand{\field}{\mathrm{F}}
\newcommand{\reals}{\R}
\newcommand{\rtnls}{\Q}
\title{String homology, and closed geodesics on manifolds which are elliptic spaces}
\author{J.D.S. Jones, J. McCleary}
\date{}
\begin{document}
\maketitle

\begin{abstract}
Let $M$ be a closed simply connected smooth manifold.  Let $\F_p$ be the finite field with $p$ elements where $p> 0$ is a prime integer.  Suppose that $M$ is an $\F_p$-elliptic space in the sense of [FHT91a].  We prove that if the cohomology algebra $H^*(M, \F_p)$ cannot be generated (as an algebra) by one element, then any Riemannian metric on $M$ has an infinite number of geometrically distinct closed geodesics.  The starting point is a classical theorem of Gromoll and Meyer [GM69].  The proof  uses string homology, in particular the spectral sequence of [CJY04], the main theorem of [McC87], and the structure theorem for elliptic Hopf algebras over $\F_p$ from [FHT91a].
\end{abstract}

\section{Introduction}
We work over a ground field $\F$ and use $\F$ as the coefficients of homology and cohomology.  Our main applications are in the case where this ground field is the finite field $\F_p$ with $p$ elements, where $p > 0$ is a prime integer.

Let $\HL_*(M)$ denote the string homology algebra of a closed, simply connected manifold $M$.  String homology is a graded commutative 
$\F-$algebra defined as follows.  Let $LM$ be the free loop space of $M$.  In [CS99], Chas and Sullivan define the {\em string product}
$$
H_p(LM) \o H_q(LM) \to H_{p+q-n}(LM)
$$
where $n$ is the dimension of $M$. This product is studied from the point of view of homotopy theory in [CJ02].  The {\em string homology algebra} is defined by setting $\HL_s(M) = H_{s+n}(LM;\F)$ and using the string product to define the product.  It is proved that this string product makes $\HL_*(M)$ into a graded commutative $\F-$algebra in both [CS99] and [CJ02]. 

Our main result about string homology is the following theorem. In the statement $\Om X$ refers to the based loop space of $X$.

\begin{theorem}
Let $M$ be a simply connected, closed manifold.  Suppose there is a constant $C$ and  an integer $K$ such that
$$
 \sum_{i \leq n} \dim H_i(\Om M; \F_p) \leq Cn^K.
$$
Let $K_0$ be the minimal exponent which can occur in this bound.  Then the string homology algebra $\HL_*(M; \F_p)$ contains a polynomial algebra $P$ over $\F_p$ on $K_0$ generators and $\HL_*(M; \F_p)$ is a finitely generated free module over $P$.  
\end{theorem}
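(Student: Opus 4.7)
The plan is to analyse the Cohen--Jones--Yan spectral sequence from [CJY04]
\[
E_2^{*,*} = H^*(M; \F_p) \o H_*(\Om M; \F_p) \Rightarrow \HL_*(M; \F_p),
\]
which is a multiplicative spectral sequence whose $E_2$-product is the tensor product of the intersection product on $H^*(M; \F_p)$ and the Pontryagin product on $H_*(\Om M; \F_p)$.  The polynomial subalgebra $P$ in the conclusion should arise from classes that lift from the fibre $H_*(\Om M; \F_p)$, while the finitely generated free module structure should be powered by the finite-dimensionality of $H^*(M; \F_p)$.

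The first step is to locate a polynomial subalgebra inside $H_*(\Om M; \F_p)$.  Since $M$ is simply connected, $H_*(\Om M; \F_p)$ is a connected graded Hopf algebra, and polynomial growth of its Betti numbers with minimal summed exponent $K_0$ fits the hypotheses of the structure theorem from [FHT91a].  That theorem, together with a Poincar\'e series comparison, yields a polynomial subalgebra $Q = \F_p[y_1, \ldots, y_{K_0}] \subset H_*(\Om M; \F_p)$ over which $H_*(\Om M; \F_p)$ is a finitely generated free module, the count $K_0$ being forced by the minimality of the exponent in the growth bound.

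The next, and crucial, step is to show that the classes $1 \o y_i \in E_2^{0, *}$ are permanent cycles, so that they lift to elements $\tilde y_i \in \HL_*(M; \F_p)$.  This is where the main theorem of [McC87] enters: it should supply the vanishing of all higher differentials on the polynomial generators of the loop-space Hopf algebra within the Serre/CJY spectral sequence for the fibration $\Om M \to LM \to M$.  Given survival, choose lifts $\tilde y_i$; the subalgebra of $\HL_*(M; \F_p)$ they generate is polynomial, since their images in $E_\infty$ are algebraically independent, and we declare $P$ to be this polynomial algebra.

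Finally, because the $1 \o y_i$ are permanent cycles, multiplication by them commutes with every differential $d_r$, so each page $E_r$ is a module over $\F_p[1 \o y_1, \ldots, 1 \o y_{K_0}]$.  Combined with the finite dimensionality of $H^*(M; \F_p)$ and the finitely generated free structure of $H_*(\Om M; \F_p)$ over $Q$, this makes $E_2$ (hence $E_\infty$) a finitely generated free module over that polynomial subalgebra, and lifting through the multiplicative filtration transfers the same conclusion to $\HL_*(M; \F_p)$ over $P$.  The main obstacle is the permanent-cycle claim, exactly the content of McCleary's theorem; a secondary subtlety is that \emph{freeness} (not just finite generation) must be preserved when passing from $E_\infty$ to $\HL_*(M; \F_p)$, which uses that $P$ is a polynomial ring and the filtration respects its action.
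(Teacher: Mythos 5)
Your overall architecture matches the paper's: use the structure theorem for elliptic Hopf algebras from [FHT91a] to produce a polynomial subalgebra $Q=\F_p[y_1,\dots,y_{K_0}]$ over which $H_*(\Om M;\F_p)$ is a finitely generated free module, then transport it into $\HL_*(M;\F_p)$ through the Cohen--Jones--Yan spectral sequence. But there is a genuine gap at the step you yourself identify as the crux: the claim that the classes $1\o y_i\in E_2^{0,*}$ are permanent cycles. McCleary's theorem [McC87] cannot supply this. That theorem says only that if $H^*(X;\F_p)$ needs at least two algebra generators then $H_*(\Om X;\F_p)$ is doubly infinite; it is a growth statement about the based loop space, with no content about differentials in the free loop fibration or the CJY spectral sequence. (In the paper it is used only in the proof of Theorem 1.2, to guarantee \emph{two} polynomial generators.) Moreover the assertion itself is too strong: there is no reason for the generators $y_i$ to survive, and in general they do not.

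The correct mechanism, which is the paper's Lemma 2.3, is different and uses two facts you did not invoke: the polynomial subalgebra produced by the structure theorem of [FHT91a] is \emph{central} in $H_*(\Om M;\F_p)$, and each $d_r$ is a derivation over $\F_p$. Hence for central $y$ one has $d_r(y^p)=py^{p-1}d_ry=0$, so $y^p$ survives to $E_{r+1}$; since at most $n-2$ differentials can be nonzero on $E_*^{0,*}$ (as $H^1=H^{n-1}=0$ and $d_r=0$ for $r>n$), the element $y^{p^{n-2}}$ is a permanent cycle. One therefore replaces $Q$ by $\F_p[y_1^{p^{n-2}},\dots,y_{K_0}^{p^{n-2}}]$, still a polynomial algebra on $K_0$ generators over which $H_*(\Om M;\F_p)$ remains finitely generated and free, and lifts \emph{these} classes to $\HL_*(M;\F_p)$; algebraic independence in $E_\infty$ then gives the polynomial subalgebra $P$, exactly as in the paper's Theorem 2.1. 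Your closing module-theoretic argument (finite dimensionality of $H^*(M;\F_p)$ forcing finite generation of $E_\infty$ over the surviving polynomial subalgebra) is sound in outline once this repair is made; the residual subtlety about freeness, which you correctly flag, is one the paper itself handles by establishing the free-module structure directly on $H_*(\Om M;\F_p)$ rather than at $E_\infty$.
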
  

If $H_*(\Om M; \F_p)$ satisfies the growth hypotheses in the statement of this theorem, then we say that $H_*(\Om M; \F_p)$ has {\it polynomial growth}.  The main application of this theorem is the following result.

\begin{theorem}
Let $M$ be a simply connected, closed manifold.  Suppose $H_*(\Om M; \F_p)$ has polynomial growth and the algebra $H^*(M; \F_p)$ cannot be generated by one element.  Then for any metric on $M$, there is an infinite number of geometrically distinct closed geodesics on $M$.
\end{theorem}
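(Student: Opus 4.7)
The strategy is to reduce the statement to the classical theorem of Gromoll and Meyer [GM69]: if the Betti numbers $\dim H_i(LM;\F_p)$ are unbounded as $i\to\infty$, then any Riemannian metric on $M$ admits infinitely many geometrically distinct closed geodesics. Thus it suffices to prove that $\dim H_i(LM;\F_p)$ is unbounded under our hypotheses.

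First I would apply Theorem 1.1 to $M$. Polynomial growth of $H_*(\Om M;\F_p)$ produces a minimal exponent $K_0\ge 1$, a polynomial subalgebra $P\subset \HL_*(M;\F_p)$ on $K_0$ generators of positive degrees $d_1,\dots,d_{K_0}$, and the fact that $\HL_*(M;\F_p)$ is a finitely generated free module over $P$. The Hilbert series of $\HL_*(M;\F_p)$ is therefore of the form $Q(t)\prod_{i=1}^{K_0}(1-t^{d_i})^{-1}$ with $Q(t)$ a polynomial having non-negative integer coefficients. Since $\HL_s(M;\F_p)=H_{s+n}(LM;\F_p)$, the dimensions $\dim H_i(LM;\F_p)$ are unbounded if and only if $K_0\ge 2$. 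The problem reduces to proving: if $H^*(M;\F_p)$ cannot be generated by a single element, then $K_0\ge 2$.

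I would prove this by contrapositive, using the structure theorem for elliptic Hopf algebras of [FHT91a]. Polynomial growth of $H_*(\Om M;\F_p)$ is precisely the $\F_p$-ellipticity condition in [FHT91a], so their classification applies and describes $H_*(\Om M;\F_p)$ as a tensor product of a polynomial algebra on $K_0$ generators (in even degrees when $p$ is odd) with a finite exterior factor on odd-degree generators. If $K_0=1$ the algebra is very restrictive: a single even-degree polynomial generator $x$ together with a finite exterior part. I would then analyse the path-loop fibration $\Om M\to PM\to M$ through its Serre or Eilenberg--Moore spectral sequence, transgressing the polynomial and exterior generators of $H_*(\Om M;\F_p)$ to recover $H^*(M;\F_p)$; the conclusion to extract is that $H^*(M;\F_p)$ must then be a truncated polynomial algebra on a single generator, as occurs for spheres and projective spaces, contradicting our hypothesis.

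The main obstacle I anticipate is exactly this last structural deduction: extracting the number of algebra generators of $H^*(M;\F_p)$ from the [FHT91a] description of $H_*(\Om M;\F_p)$, where Hopf algebra structure, transgression, and the parity of generators at the prime $p$ all interact, and which is the content that is not already packaged in Theorem 1.1. Once $K_0\ge 2$ is in hand, the Poincar\'e series computation above delivers unbounded Betti numbers of $LM$, and Gromoll--Meyer concludes the argument.
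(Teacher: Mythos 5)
Your overall architecture matches the paper's: reduce to Gromoll--Meyer, use the polynomial subalgebra of $\HL_*(M;\F_p)$ furnished by Theorem 1.1, and show that the hypothesis on $H^*(M;\F_p)$ forces $K_0\ge 2$ (equivalently, that the central polynomial algebra in $H_*(\Om M;\F_p)$ has at least two generators, which Theorem 2.1 then transports into string homology). The reduction itself is fine; in fact you do not even need the Hilbert series of the free module --- merely containing a polynomial algebra on two generators already makes $\sum_{i\le n}\dim H_i(LM;\F_p)$ grow quadratically, hence the individual Betti numbers unbounded.

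The genuine gap is in the step you yourself flag as the ``main obstacle'': proving that $K_0=1$ forces $H^*(M;\F_p)$ to be generated by one element via a transgression argument in the path--loop fibration. This is precisely where the paper does \emph{not} argue directly: it invokes McCleary's theorem [McC87] (Theorem 1.5 of the paper), which states that if $H^*(X;\F_p)$ cannot be generated by one element then $H_*(\Om X;\F_p)$ is doubly infinite; combined with the Felix--Halperin--Thomas structure theorem (an elliptic Hopf algebra is a finitely generated free module over a central polynomial algebra on $K_0$ generators, so its Betti numbers are bounded when $K_0\le 1$), this yields $K_0\ge 2$ immediately. Your proposed substitute is essentially an attempt to reprove a special case of [McC87], and the sketch as given does not work: the structure theorem for elliptic Hopf algebras in characteristic $p$ only gives a central polynomial sub Hopf algebra $C$ with finite-dimensional quotient (not a clean splitting as polynomial tensor exterior compatible with transgression), and the passage from the algebra structure of $H_*(\Om M;\F_p)$ back to the number of generators of $H^*(M;\F_p)$ is exactly the hard content of McCleary's paper (which uses the Eilenberg--Moore machinery seriously, not a naive transgression of generators). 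As written, your argument therefore rests on an unproven and nontrivial claim; the fix is simply to cite [McC87] at that point, after which the rest of your reduction goes through and coincides with the paper's proof.
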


To obtain this result from Theorem 1.1 we use the Gromoll - Meyer theorem relating closed geodesics and the topology of the free loop space.  A metric on $M$ defines a function, the {\em energy function}, on $LM$ given by 
$$
\gamma \mapsto \displaystyle \int_{S^1} \langle \gamma'(t), \gamma'(t)\rangle\, dt.
$$
 If $\gamma\colon S^1 \to M$ is a closed geodesic parametrized by arc length then $\gamma$ is a critical point of the energy function, as is the  loop $\gamma_n$ defined by $\gamma_n(z) = \gamma(z^n)$.  Furthermore every critical point of the energy function is of the form $\gamma_n$ where $\gamma$ is a closed geodesic parametrised by arc length [B56].

The circle $S^1$ acts on $LM$ by rotating loops and the energy function is  $S^1$-invariant.  It follows that any closed geodesic $\gamma$ parametrized by arc length generates an infinite number of critical $S^1$ orbits of the energy function.   In general these orbits will not be isolated but if there are only a finite number of geometrically distinct closed geodesics these orbits will be isolated.

We use the following terminology for graded vector spaces.  If each $V_i$ is finite dimensional we say $V$ has {\em finite type}.  If $V$ has finite type then we say it has {\em finite dimension} if $\dim V_i$ is zero for all but a finite number of $i$, {\em infinite dimension} if $\dim V_i$ is non-zero for an infinite number of $i$, and {\em doubly infinite dimension} if the sequence of numbers $\dim V_i$ is unbounded.  Note that doubly infinite dimension is the same as polynomial growth with minimal exponent at least $2$.  Using Morse-Bott theory, Gromoll and Meyer showed in [GM69] that the relation between critical points of the energy function and closed geodesics leads to the following theorem.

\begin{theorem}
Let $M$ be a simply connected closed manifold.  If $H_*(LM; \F)$ has doubly infinite dimension for some field $\F$, then for any metric on $M$ there is an infinite number of geometrically distinct closed geodesics on $M$.
 \end{theorem}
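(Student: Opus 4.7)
The plan is to prove the contrapositive: assuming $(M,g)$ admits only finitely many geometrically distinct closed geodesics $c_1, \ldots, c_r$, derive a uniform bound $\dim H_i(LM; \F) \leq C$ for all $i$, which contradicts the doubly infinite dimension hypothesis. The framework is Morse-Bott theory applied to the energy function $E$ on $LM$, where $LM$ is replaced by the Hilbert manifold of $H^1$-loops, which has the same homotopy type and on which $E$ is a smooth $S^1$-invariant function satisfying the Palais-Smale condition.

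The first step is to describe the critical set of $E$ under the finiteness assumption. As noted in the text, every critical point is an iterate $\gam_n$ of some closed geodesic parametrized by arc length, so the critical set decomposes as the disjoint union of the submanifold of constant loops (a copy of $M$, contributing a fixed finite-dimensional piece of $H_*(LM;\F)$) together with the countable collection of $S^1$-orbits of the iterates $c_j^n$ for $1 \leq j \leq r$ and $n \geq 1$. Each nonconstant orbit is a circle, and the local Morse-Bott homology at an orbit of Morse index $\lambda$ and nullity $\nu$ is supported in degrees between $\lambda$ and $\lambda + \nu + 1$ with total dimension bounded by a function of $\nu$ alone.

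The second step is to control these local invariants using Bott's iteration formulas: for each prime closed geodesic $c_j$ the index $\lambda(c_j^n)$ is asymptotically linear in $n$ with slope the average index $\Lambda(c_j) \geq 0$, while the nullity $\nu(c_j^n)$ is uniformly bounded by $2\dim M - 2$. In the generic case $\Lambda(c_j) > 0$ this forces only $O(1)$ iterates of $c_j$ to land in any fixed range of degrees, so summing over the finitely many $j$ gives a uniform bound on the number of critical orbits contributing to $H_i(LM; \F)$ in each degree, and hence a uniform bound on the Betti numbers themselves.

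The main obstacle is the degenerate case $\Lambda(c_j) = 0$, where a priori all iterates $c_j^n$ could cluster near the same small Morse index and produce arbitrarily large contributions to a single Betti number. The resolution, carried out in [GM69], is a refined local analysis of the equivariant Morse-Bott picture around a degenerate iterated geodesic: one shows that the iterates assemble into a single critical submanifold whose global contribution to $H_*(LM;\F)$ remains finite-dimensional, rather than accumulating unboundedly in low degrees. Assembling the generic and degenerate cases produces the required uniform bound on $\dim H_i(LM;\F)$, contradicting doubly infinite dimension and completing the proof.
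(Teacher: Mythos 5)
The paper offers no proof of this statement; it is quoted directly from [GM69], so your proposal has to be judged as a reconstruction of the Gromoll--Meyer argument. Your overall strategy (contrapositive, Morse theory for the energy functional on the Hilbert manifold of $H^1$-loops, Bott's iteration estimates) is indeed the right one, but two of your key steps are wrong as stated. First, the assertion that the local homology of an isolated critical orbit of index $\lambda$ and nullity $\nu$ has ``total dimension bounded by a function of $\nu$ alone'' is false: already for an isolated critical point of a smooth function on $\R^2$, e.g.\ $\mathrm{Re}(z^k)$, the critical groups have dimension $k-1$ with $\nu = 2$ fixed. The heart of [GM69] is precisely the lemma you are eliding: for a \emph{fixed} prime closed geodesic $c$, the local homology groups of the iterates $c^n$ realize only finitely many isomorphism types, hence are uniformly bounded in $n$. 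This rests on the Gromoll--Meyer splitting lemma together with a comparison of the characteristic manifolds of $c^n$ and $c^{nm}$ under the iteration maps, and it cannot be replaced by a bound depending on $\nu$ alone.

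Second, your resolution of the case $\Lambda(c_j)=0$ is not what happens and is not needed. The iterates $c_j^n$ sit at the distinct critical levels $n^2E(c_j)$, so they certainly do not ``assemble into a single critical submanifold.'' The actual resolution is elementary: Bott's estimate $|\lambda(c^n)-n\Lambda(c)|\le \dim M-1$ shows that when $\Lambda(c_j)=0$ every iterate has index at most $\dim M-1$, so together with $\nu\le 2\dim M-2$ its local homology is confined to degrees at most $3\dim M - 2$. Such geodesics therefore contribute nothing to $b_k(LM;\F)$ for $k$ large, while each individual $b_k$ is finite in any case because $LM$ has finite type when $M$ is a simply connected finite complex. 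Boundedness of the whole sequence $\{b_k\}$ then follows from your generic-case count applied only to the geodesics with $\Lambda(c_j)>0$. With the uniform local bound restored and the degenerate case handled this way, the argument closes.
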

 
If $\pi_1(M)$ is finite, then we can apply this theorem to the universal cover $\tilde{M}$ of $M$.  If $\pi_1(M)$ is infinite and $\pi_1(M)$ has an infinite number of conjugacy classes, then $LM$ has an infinite number of components.    Given  a metric on $M$ we can choose a minimiser of the energy function in each component  of $LM$ and it follows that this metric has an infinite number of geodesics [BaThZ81].  This leaves the case where $\pi_1(M)$ is infinite but only has a finite number of conjugacy classes.  Very little is known about this case [BngHi84].
  
In [V-PS76] Sullivan and Vigu\'e-Poirrier took up the case where $\F = \Q$ and, as an application of the theory of minimal models in rational homotopy, proved the following theorem.  

\begin{theorem}
Suppose $M$ is a closed, simply connected manifold and the algebra $H^*(M;\Q) $ is not generated by one element.  Then $H_*(LM; \Q)$ is doubly infinite.
\end{theorem}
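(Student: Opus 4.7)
The natural approach is via Sullivan's theory of rational minimal models, which is the right tool once the conclusion concerns $H_*(LM;\Q)$. First, form the minimal Sullivan model $(\Lambda V,d)$ of $M$; since $M$ is closed and simply connected, $V=V^{\geq 2}$ is of finite type. Sullivan's construction then provides a quasi-free commutative model for the free loop space,
\[
\mathcal{M}(LM) \;=\; (\Lambda V \otimes \Lambda \bar V,\ D),
\]
where $\bar V$ is a copy of $V$ with degrees shifted down by one and $D$ is built from $d$ together with the degree $-1$ derivation $s$ defined on generators by $v \mapsto \bar v$, $\bar v \mapsto 0$, via $D(\bar v)=-s(dv)$. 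Then $H^*(LM;\Q)\cong H^*(\Lambda V \otimes \Lambda \bar V,D)$, and the task becomes showing that the Betti numbers of this cdga are unbounded.

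Next, invoke the rational dichotomy of F\'elix--Halperin: $M$ is either rationally elliptic ($\dim V<\infty$) or rationally hyperbolic ($\dim V=\infty$). In the hyperbolic case, the theorem of F\'elix--Halperin already yields that $\dim H_i(\Omega M;\Q)$ grows exponentially in $i$. Using the path--loop fibration $\Omega M \to LM \to M$, which admits a section by constant loops, this exponential growth propagates to $H_*(LM;\Q)$ via the Serre spectral sequence (the section guarantees enough surviving classes). This case does not really require the hypothesis on the number of algebra generators.

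The serious case is the elliptic one. Here $\dim V<\infty$, and the hypothesis that $H^*(M;\Q)$ is not generated by a single element excludes precisely the rational homotopy types of $S^{2n+1}$, $S^{2n}$ and $\CP^n$. The plan is to exploit the structure of rational elliptic spaces -- in particular $\dim V^{\mathrm{even}} \leq \dim V^{\mathrm{odd}}$ together with the constraints on $d$ arising from Poincar\'e duality -- to construct an explicit infinite sequence of nonzero cohomology classes in $(\Lambda V\otimes\Lambda\bar V,D)$ of unboundedly high polynomial degree. Concretely, starting from two algebra generators $x,y$ of $H^*(M;\Q)$, one forms polynomial expressions in the corresponding shifted generators $\bar x,\bar y\in\bar V$, and uses the weight bigrading on the loop space model (number of bar generators plus polynomial degree) to argue that the resulting classes remain linearly independent in $H^*(LM;\Q)$. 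The main obstacle is precisely this last step: proving linear independence in cohomology, i.e.\ ruling out the large-scale cancellations by $D$ that do occur in the one-generator (e.g.\ odd-sphere) case. The decisive structural input is the minimality of $(\Lambda V,d)$, which forces $dv$ to be decomposable and hence $D(\bar v)=-s(dv)$ to be decomposable as well; this prevents $D$ from turning a top-weight element into a lower-weight element and is what underlies the distinction between the one-generator and multi-generator cases.
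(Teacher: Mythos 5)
You should first be aware that the paper does not prove this statement at all: Theorem 1.4 is quoted as background from [V-PS76] (Vigu\'e-Poirrier and Sullivan), and is used as an input elsewhere. So there is no proof in the paper to compare against; what you have written is an outline of the original minimal-model argument of [V-PS76], and it has to be judged on its own.

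As a proof it has a genuine gap, and you have in fact located it yourself: in the elliptic case ($\dim V<\infty$) everything hinges on showing that the classes you build out of $\bar x,\bar y$ are linearly independent in $H^*(\Lambda V\o\Lambda\bar V,D)$, and you leave this as ``the main obstacle'' rather than resolving it. That step \emph{is} the theorem. Worse, the structural mechanism you offer as decisive does not actually discriminate between the two cases. The differential $D$ preserves the word-length grading by the number of barred generators for \emph{any} Sullivan model of $LM$, minimal or not, since $D\bar v=-s(dv)$ always has bar-length one; and minimality (hence decomposability of $D\bar v$) holds just as well for $\CP^n$, whose model $(\Lambda(x_2,y_{2n+1}),\,dy=x^{n+1})$ is minimal and whose free loop space nevertheless has bounded Betti numbers --- the cancellations you need to rule out take place \emph{within} a fixed bar-weight. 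The actual argument of [V-PS76] requires a careful choice of cocycles adapted to two cohomology generators and a filtration/degree argument showing they are not coboundaries; none of that is supplied here. Two smaller points: the hyperbolic case also needs an argument, since the section of $\mathrm{ev}\colon LM\to M$ by constant loops splits off $H_*(M)$, not the fiber classes, and transferring exponential growth from $H_*(\Omega M;\Q)$ to $H_*(LM;\Q)$ through the Serre spectral sequence requires a counting estimate (this part is standard and repairable); and the appeal to the F\'elix--Halperin dichotomy is a convenient but anachronistic reorganization --- [V-PS76] splits instead on whether $\pi_*(M)\o\Q$ is finite-dimensional, which amounts to the same thing.
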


A key ingredient in the proof of Theorem 1.2 from Theorem 1.1  is the following theorem from [McC87].  

\begin{theorem}
Let $X$ be a simply connected space such that the algebra $H^*(X; \F_p)$ cannot be generated by one element.  Then $H_*(\Om X; \F_p)$ is doubly infinite.
\end{theorem}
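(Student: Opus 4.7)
The natural approach is to study $H^*(\Om X; \F_p)$ via the Eilenberg-Moore spectral sequence for the path-loop fibration $\Om X \to PX \to X$; the underlying assumption (and the case needed for the application in Theorem 1.2) is that $X$ has finite-dimensional mod $p$ cohomology, so that this spectral sequence converges strongly to $H^*(\Om X; \F_p)$ with $E_2$-page
\[
E_2^{*,*} \;=\; \Tor^{*,*}_{H^*(X; \F_p)}(\F_p, \F_p).
\]
The proof then splits naturally into a commutative-algebra estimate for the growth of $E_2$ and a spectral-sequence argument controlling the differentials.

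For the first half, set $A = H^*(X; \F_p)$, a connected finite-dimensional graded commutative $\F_p$-algebra that by hypothesis requires at least two algebra generators. The first column $\Tor^{1,*}_A$ records the indecomposables $QA$, which has dimension at least two. Invoking the dichotomy of Gulliksen and Avramov for finite-dimensional commutative local rings --- complete intersections give polynomial growth of $\Tor$, all others give exponential growth --- one concludes that $\Tor_A(\F_p, \F_p)$ is doubly infinite in either case. In the complete-intersection subcase, a direct computation via a Koszul-type resolution shows that two independent generators already force at least quadratic growth in the total degree; in the remaining case, $\Tor$ grows exponentially.

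The main obstacle is to transfer this growth from $E_2$ to $E_\infty$. The differentials $d_r$ have bidegree $(r, 1-r)$, are multiplicative, are compatible with the Steenrod algebra action, and are constrained by the contractibility of $PX$. The structural input I would invoke here is the Milnor-Moore and Browder classification of connected graded Hopf algebras over $\F_p$ applied to $H_*(\Om X; \F_p)$: as a Hopf algebra, it is built from polynomial, exterior, and truncated-polynomial factors, and its Poincar\'e series is pinned down by the dimensions of its primitives in each degree. Combining this structural rigidity with a bidegree count --- a single $d_r$ pairs a class in bidegree $(s, t)$ with one in $(s+r, t+1-r)$, so in each fixed total degree at most a bounded number of classes can be killed per page --- should give the desired lower bound on the Poincar\'e series of $H_*(\Om X; \F_p)$.

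The hardest step is this final comparison between the spectral-sequence output and the Hopf-algebra structure, since mod $p$ phenomena (Steenrod operations, divided powers, the case split between $p = 2$ and odd $p$, and the interplay between primitives and indecomposables) all complicate the bookkeeping. This is where McCleary's argument must substitute genuinely mod $p$ Hopf-algebra considerations for the clean transcendence-degree calculation available rationally in Vigu\'e-Poirrier and Sullivan.
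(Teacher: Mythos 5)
First, a point of orientation: the paper does not prove this statement. It is Theorem~1.5, imported verbatim from [McC87] and used as a black box in the deduction of Theorem~1.2, so there is no in-paper proof to compare your argument against; the relevant comparison is with McCleary's original article. Your outline does have the right overall shape for that argument: the Eilenberg--Moore spectral sequence of the path-loop fibration with $E_2=\Tor_{H^*(X;\F_p)}(\F_p,\F_p)$, growth estimates for $\Tor$ over the finite-dimensional graded algebra $H^*(X;\F_p)$ in the spirit of Gulliksen--Avramov, and you are right to flag the implicit finiteness hypothesis on $H^*(X;\F_p)$ (present in [McC87], and automatic in the paper's application where $X=M$ is a closed manifold; note, though, that strong convergence of the spectral sequence needs only simple connectivity --- the finiteness is needed for the algebraic input, not for convergence).

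The genuine gap is exactly where you place it, and your proposed repair does not work. The claim that ``in each fixed total degree at most a bounded number of classes can be killed per page'' is unjustified: the number of bidegrees $(-s,\,n+s)$ contributing to total degree $n$ grows linearly with $n$, there is no a priori finite page at which the spectral sequence degenerates, and the differentials connect total degree $n$ to total degrees $n\pm 1$, so a pure dimension count cannot rule out wholesale cancellation (just as in a long exact sequence, where everything can cancel). The appeal to Borel/Milnor--Moore structure theory to ``pin down'' the Poincar\'e series of $H_*(\Om X;\F_p)$ is circular as stated: that structure constrains the answer only in terms of data (the primitives or indecomposables of $H_*(\Om X;\F_p)$) that you are trying to determine. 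A complete proof must instead exhibit specific surviving classes or a surviving subalgebra: the usable positional fact is that a class in homological degree $-s$ can support only $d_2,\dots,d_s$, and this must be combined with the multiplicative, divided-power, and Steenrod structure of the spectral sequence and the two-generator hypothesis to produce a subalgebra of $E_\infty$ whose growth can be read off. That construction is the actual content of [McC87]; your write-up names it as ``the hardest step'' but does not supply it, so what you have is a plausible outline rather than a proof.
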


The main idea which led to this paper is to use string homology with coefficients in $\F_p$ to convert Theorem 1.5 into a result about string homology. The first step in this process is to use the spectral sequence of [CJY04] to relate string homology and the homology of the based loop space.  The second is to use the structure theorems for elliptic Hopf algebras over $\F_p$ from [FHT91a] to control the input into this spectral sequence.

This paper is set out as follows.  In \S2 we deal with those aspects of string homology our main results require.  The primary objective in \S2 is to prove Theorem 2.1.  In \S3 we give applications of Theorem 2.1.  For example we explain how this theorem applies to the main examples of [McCZ].  In \S4 we summarize the results  from [FHT91a]  we need and complete the proof of the main theorems.  Finally in \S5 we give applications of the main theorem to homogeneous spaces.

Both authors would like to acknowledge the support of the Isaac Newton Institute in Cambridge during the Grothendieck-Teichm\"uller Groups, Deformation and Operads (GDO) programme in 2013 where this project began.

\section{String homology}

In [CJY04, Theorem 1], it is shown that there is a multiplicative second quadrant spectral sequence 
$(E_r^{s,t}, d_r^{s,t})$ with
$$
d_r^{s,t}\colon E_r^{s,t} \to E_r^{s-r, t+r-1}
$$$$
E_2^{s,t} = H^{-s}(M)\otimes H_t(\Om M)
$$
and converging to $\HL_*(M)$.

Here second quadrant means that $E_r^{s,t}$ is zero if $s > 0$ or $t < 0$.  Multiplicative means that each term $E_r^{s,t}$ is a bigraded algebra, $d_r^{s,t}$ is a bigraded derivation of the product, and the $E_{\infty}$ term of the spectral sequence is the bigraded algebra associated to a filtration of $\HL_*(M)$.  The edge homomorphism $h\colon \HL_*(M) \to E_{\infty}^{0,*} \subseteq H_*(\Om M)$ is the natural algebra homomorphism $h\colon \HL_*(M) \to H_*(\Om M)$.  This give us a method of relating the algebras $H_*(\Om M)$ and $\HL_*(M)$.  

The simplest way to think of this spectral sequence is to use the string topology spectrum $\cals(M) = LM^{-TM}$ introduced in [CJ04].  The skeletal filtration of $M$ induces a filtration of $LM$ using the evaluation map $LM \to M$, and this in turn induces a filtration of $\cals(M)$.  The spectral sequence is the spectral sequence obtained from this filtration of $\cals(M)$.

Our main application of this spectral sequence is the following theorem.

\begin{theorem}  
Let $M$ be a closed oriented manifold. Then $\HL_*(M; \F_p)$ contains a polynomial algebra over $\F_p$ on $k$ generators if and only if the centre of $H_*(\Om M; \F_p)$ contains a polynomial algebra over $\F_p$ on $k$ generators. 
\end{theorem}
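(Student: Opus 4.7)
The plan is to analyse both sides of the equivalence via the spectral sequence recalled at the start of this section,
\[
E_2^{s,t} = H^{-s}(M) \otimes H_t(\Om M) \Longrightarrow \HL_*(M),
\]
using its multiplicativity and the edge algebra homomorphism $h\colon \HL_*(M) \to H_*(\Om M)$.

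The starting observation is a nilpotency statement. The $s$-filtration on $\HL_*(M)$ has $F_s = 0$ for $s < -n$, while multiplicativity gives $F_s \cdot F_{s'} \subseteq F_{s+s'}$; so any element of $\HL_*(M)$ whose leading term lies in $E_\infty^{s,*}$ with $s<0$ is nilpotent. Hence every polynomial generator of a polynomial subalgebra $P \subseteq \HL_*(M)$ must have leading term in $E_\infty^{0,*} \subseteq H_*(\Om M)$, and the same argument applied to any would-be polynomial relation among the leading terms shows $h(P)$ is a polynomial subalgebra of $H_*(\Om M)$ of the same rank $k$.

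For the reverse direction, suppose $\F_p[z_1,\ldots,z_k] \hookrightarrow Z(H_*(\Om M))$. Each $z_i$ sits in $E_2^{0,*}$. Because the product on $E_2$ is the cup product tensored with the Pontryagin product, centrality of $z_i$ in $H_*(\Om M)$ upgrades to centrality of the class $1 \otimes z_i$ in $E_2$; and since the differentials are derivations, centrality is preserved page by page. For any $x \in E_r^{0,*}$ central in $E_r$,
\[
d_r(x^p) = p\, x^{p-1}\, d_r(x) = 0
\]
by the Leibniz rule in characteristic $p$. Iterating, a sufficiently large $p$-power $z_i^{p^{N_i}}$ is a permanent cycle and lifts to $\tilde w_i \in \HL_*(M)$ with $h(\tilde w_i) = z_i^{p^{N_i}}$. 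Algebraic independence of the $\tilde w_i$ follows from algebraic independence of their images, so they generate a polynomial subalgebra of $\HL_*(M)$ on $k$ generators.

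What remains, for the forward direction, is to show that the polynomial subalgebra $h(P) \subseteq H_*(\Om M)$ actually lies in the graded centre. This is the main obstacle. The natural route identifies $\HL_*(M)$ with the Hochschild cohomology $\HH^*(C_*(\Om M), C_*(\Om M))$, under which the filtration of the spectral sequence matches the standard bar filtration and the zeroth column is $\HH^0 = Z(H_*(\Om M))$; alternatively, $h$ admits a description as the umkehr map for the fibre inclusion $\Om M \hookrightarrow LM$, from which centrality of the image may be extracted from the graded commutativity of the string product on $\HL_*(M)$.
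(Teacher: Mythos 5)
Your argument for the reverse direction, and for the injectivity half of the forward direction, is essentially the paper's own: the Frobenius-power trick on central classes ($d_r(x^p)=p\,x^{p-1}d_r x=0$ because the differentials are derivations and the characteristic is $p$) is the paper's Lemma 2.3, and your observation that elements whose leading term sits in negative filtration are nilpotent (so that a polynomial relation among the $h(w_i)$ would force a nilpotent, hence zero, element of a polynomial algebra) is exactly the paper's Lemma 2.2. Those parts are correct as written.

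The point you leave open --- that $\im\bigl(h\colon \HL_*(M;\F_p)\to H_*(\Om M;\F_p)\bigr)$ lies in the centre --- is indeed the crux of the forward direction, and you are right to single it out; but as it stands this is a gap, not a proof. The paper does not prove it either: it imports it as Theorem 2.4 from F\'elix--Thomas--Vigu\'e-Poirrier [FTV]. Of your two sketched routes, the first (Hochschild cohomology of $C_*(\Om M)$) is close to how that reference actually argues, but your assertion that the zeroth column of the filtration ``is'' $\HH^0 = Z(H_*(\Om M))$ is the substance of the theorem, not a formal property of the bar filtration, so it cannot be invoked as automatic. The second route does not work by itself: graded commutativity of the string product only yields that $h(P)$ is a \emph{commutative} subalgebra of $H_*(\Om M;\F_p)$, which is strictly weaker than centrality; the genuine argument needs the compatibility of the loop product with the $H_*(\Om M)$-module structure on $H_*(LM)$. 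Replace your closing paragraph with a citation of [FTV] (or an actual proof of that compatibility) and the argument is complete and coincides with the paper's.
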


The first step is to prove the following lemma.  

\begin{lemma}  
Let $M$ be a closed manifold. The kernel of the ring homomorphism $h\colon \HL_*(M) \to H_*(\Om M)$ is a nilpotent ideal.
\end{lemma}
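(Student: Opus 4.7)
The plan is to exploit the multiplicative structure of the Cohen--Jones--Yan spectral sequence together with its boundedness in the $s$-direction. The kernel of $h$ should be identified as a proper sub-object of $\HL_*(M)$ in the filtration underlying the spectral sequence; multiplicativity will then force iterated products of kernel elements to drop into deeper filtration levels, which must eventually vanish because the filtration is finite (as $M$ is finite-dimensional).

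More concretely, the filtration of $\cals(M) = LM^{-TM}$ coming from the skeletal filtration of $M$ induces a filtration $F^\bullet$ of $\HL_*(M)$ whose successive quotients form the $E_\infty$ page, $F^s/F^{s-1} = E_\infty^{s,*}$. With the natural indexing, the edge map factors as
$$
h \colon \HL_*(M) \twoheadrightarrow E_\infty^{0,*} \hookrightarrow E_2^{0,*} = H_*(\Om M),
$$
so $I := \ker h$ consists exactly of classes detected only in columns with $s \leq -1$ of $E_\infty$, i.e.\ $I = F^{-1}$. The multiplicativity of the spectral sequence means that a product of a class detected in column $s_1$ with one detected in column $s_2$ is detected in column $\leq s_1 + s_2$ (the Chas--Sullivan $-n$ shift is absorbed into the filtration once one accounts for the Thom twist by $-TM$). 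An induction then gives $I^k \subseteq F^{-k}$ for all $k \geq 1$. Since $E_2^{s,t} = H^{-s}(M) \otimes H_t(\Om M)$ vanishes whenever $-s > n = \dim M$, so does $E_\infty^{s,*}$, and the filtration is concentrated in $-n \leq s \leq 0$. Hence $I^{n+1} \subseteq F^{-n-1} = 0$, proving nilpotency with nilpotency index at most $n+1$.

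The main obstacle is essentially bookkeeping: one must orient the filtration so that the kernel of the edge map really is a sub-object that shrinks (rather than grows) under iterated products, and one must verify that the multiplicativity of [CJY04] does take the additive form $F^{s_1}\cdot F^{s_2}\subseteq F^{s_1+s_2}$ (which is the right statement once the string product's degree shift has been folded into the skeletal filtration of $\cals(M)$). After those conventions are pinned down, the nilpotency of $\ker h$ is an immediate consequence of multiplicativity and the finite-dimensionality of $M$.
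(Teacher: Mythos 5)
Your argument is correct and is essentially identical to the paper's proof: both identify $\ker h$ with the filtration subgroup $F^{-1}$ of the CJY filtration, use the multiplicative property $F^{-i}F^{-j}\subseteq F^{-i-j}$, and conclude $(F^{-1})^{n+1}\subseteq F^{-n-1}=0$ from the finiteness of the filtration. The extra bookkeeping you flag about orienting the filtration is exactly the convention the paper adopts, so nothing further is needed.
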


\begin{proof}
Let
$$
0 = F^{-n-1} \subseteq  F^{-n} \subseteq \dots \subseteq F^0 = \HL_*(M)
$$
be the (negatively indexed) filtration of $\HL_*(M)$ coming from the CJY spectral sequence.  Here $n$ is the dimension of the manifold $M$.  Then 
$$
F^{-i}  F^{-j}\subseteq F^{-i-j}
$$
and so $(F^{-1})^{n+1} = 0$.  The proposition follows since $F^{-1}$ is exactly the kernel of the edge homomorphism of this spectral sequence.
\end{proof}

Next we give a very simple but very useful lemma.

\begin{lemma}
Suppose $M$ is a closed, simply connected manifold of dimension $n$.  Let $C$ be the centre of the algebra $H_*(\Om M; \F_p)$.  Then for any $x \in C$
$$
x^{p^{n-2}}  \in \im(h\colon \HL_*(M; \F_p) \to H_*(\Om M; \F_p)).
$$
\end{lemma}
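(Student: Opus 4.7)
The plan is to work inside the CJY spectral sequence, viewing $x$ as an element of $E_2^{0,*} = H_*(\Om M;\F_p)$, and to show directly that $x^{p^{n-2}}$ is a permanent cycle, placing it in $E_\infty^{0,*} = \im(h)$. If $|x|$ is odd and $p > 2$, centrality already forces $x^2 = -x^2 = 0$ and the claim is trivial; so assume $|x|$ is even or $p = 2$. Under this hypothesis every $x^{p^j}$ is central and commutes strictly with its own differentials, so Leibniz simplifies to $d_r(y^p) = p\,y^{p-1}\,d_r(y)$ for $y = x^{p^{k-1}}$, and this vanishes in $\F_p$.

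The key geometric input is Poincar\'e duality: because $M$ is closed and simply connected (hence orientable), $H^{n-1}(M;\F_p) \iso H_1(M;\F_p) = 0$. Therefore $E_r^{-(n-1),*} = 0$ for every $r \geq 2$, and the differential $d_{n-1}\colon E_{n-1}^{0,*} \to E_{n-1}^{-(n-1),*}$ vanishes identically. This is the ``free'' vanishing responsible for the exponent $p^{n-2}$ rather than $p^{n-1}$.

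The main induction asserts that $x^{p^k}$ survives to $E_{k+2}$ (equivalently $d_r(x^{p^k}) = 0$ for $r = 2,\dots,k+1$) for every $k \geq 0$. The base $k = 0$ is trivial. For the step, assume $x^{p^{k-1}}$ is defined at $E_{k+1}$; then for each $r \leq k+1$ the element $x^{p^{k-1}}$ is defined at $E_r$, and centrality together with the Leibniz rule give
\[
d_r(x^{p^k}) \;=\; d_r\bigl((x^{p^{k-1}})^p\bigr) \;=\; p\,(x^{p^{k-1}})^{p-1}\,d_r(x^{p^{k-1}}) \;=\; 0 \text{ in } \F_p.
\]
Taking $k = n-2$ yields $d_r(x^{p^{n-2}}) = 0$ for $r = 2,\dots,n-1$.

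It remains to kill the top differential $d_n$. Applying the induction with $k = n-3$ gives $x^{p^{n-3}} \in E_{n-1}^{0,*}$; since $d_{n-1} = 0$ by the Poincar\'e duality observation, $x^{p^{n-3}}$ lifts further to $E_n^{0,*}$. At $E_n$ we therefore have $x^{p^{n-2}} = (x^{p^{n-3}})^p$ legitimately, and Leibniz once more yields $d_n(x^{p^{n-2}}) = 0$. For $r > n$, $E_r^{-r,*} = 0$ and so $d_r$ vanishes trivially. Hence $x^{p^{n-2}}$ is a permanent cycle and lies in $\im(h)$. The main obstacle is the Leibniz bookkeeping: verifying at each page that the element being raised to the $p$-th power is genuinely defined there, and that centrality of $x$ is inherited by its powers and preserved under passage to the subquotients $E_r$ (the latter is automatic because the graded-commutation identity $xy = (-1)^{|x||y|}yx$ descends to every page).
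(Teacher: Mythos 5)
Your proof is correct and follows essentially the same route as the paper's: view $x$ in $E_2^{0,*}$ of the CJY spectral sequence, use centrality plus the Leibniz rule to show each $p$-th power kills one more differential, note that $d_r=0$ for $r>n$ and that $H^{n-1}(M;\F_p)=0$ (Poincar\'e duality and simple connectivity) makes $d_{n-1}$ vanish for free, so $n-2$ Frobenius powers suffice. Your version merely makes the induction and the page-by-page bookkeeping more explicit than the paper does.
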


\begin{proof}
Because $h$ is the edge homomorphism in the CJY spectral sequence we know that an element $y \in H_*(\Om M; \F_p) = E_2^{0,*}$ is in the image of $h$ if and only if it is an infinite cycle in this spectral sequence.  Let $x \in H_*(\Om M; \F_p) = E_2^{0, *}$ be a central element.   Now $x$ may or may not be a cycle for $d_2$ in the CJY spectral sequence.  But $d_2$ is a derivation and $x$ is central so it follows that
$$
d_2 x^p = px^{p-1} d_2 x.
$$
Since the ground field is $\F_p$ we have that $d_2 x^p = 0$.  It may or may not be the case that $x^p$ is a cycle for $d_3$ but the same argument shows that $x^{p^2} = (x^p)^p$ is a cycle for $d_3$. Because $M$ has dimension $n$, $d_r = 0$ for $r \geq n + 1$.  Since $M$ is simply connected $H^1(M; \F_p) = H^{n-1}(M; \F_p) = 0$.  It follows that there are at most $n-2$ differentials on $E_2^{0,*}$ that could be non-zero, starting with $d_2$.  Repeating this argument at most $n-2$ times shows that $x^{p^{n-2}}\in E_2^{0,*}$ is an infinite cycle and it follows that $x^{p^{n-2}}$ is in the image of $h$.
\end{proof} 

We will also need the following result of [FTV].

\begin{theorem}
The image of $h\colon \HL_*(M; \F_p) \to H_*(\Om M; \F_p)$ is contained in the centre of $H_*(\Om M, \F_p)$. 
\end{theorem}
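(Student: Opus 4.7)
The strategy is to recast both sides algebraically via standard identifications and then invoke Gerstenhaber's theorem on the action of Hochschild cohomology on $\Ext$.

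By Jones and Cohen--Jones, combined with Poincar\'e duality for the closed oriented manifold $M$, there is an isomorphism of graded algebras (with appropriate degree conventions)
$$
\HL_*(M;\F_p)\iso\HH^*\bigl(C^*(M;\F_p)\bigr)
$$
sending the Chas--Sullivan product to the Hochschild cup product. Dually, since $M$ is simply connected, the Eilenberg--Moore / Adams cobar theorem supplies an isomorphism of graded algebras
$$
H_*(\Om M;\F_p)\iso\Ext^*_{C^*(M;\F_p)}(\F_p,\F_p)
$$
matching the Pontryagin product with the Yoneda composition. Under these two identifications, $h$ corresponds to the canonical ring homomorphism $\HH^*(C^*M)\to \Ext^*_{C^*M}(\F_p,\F_p)$ that exists for any augmented dga, defined on cochains by post-composing a Hochschild cocycle $(C^*M)^{\otimes r}\to C^*M$ with the augmentation $C^*M\to\F_p$. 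The agreement of this algebraic map with the topologically defined $h$ is verified by recognising both as edge homomorphisms of the same multiplicative spectral sequence with $E_2=H^*(M)\otimes H_*(\Om M)$: on the geometric side this is the CJY spectral sequence, and on the algebraic side it is the bar-filtration spectral sequence of the Hochschild complex of $C^*M$.

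The theorem then reduces to Gerstenhaber's classical fact: for any augmented dg-algebra $A$, the canonical ring homomorphism $\HH^*(A,A)\to\Ext^*_A(\F_p,\F_p)$ lands in the centre of the Yoneda algebra. This holds because a Hochschild cocycle of $A$ acts on any $\Ext^*_A(M,N)$ both from the left (via the $A$-module structure on $M$) and from the right (via $N$), and the two actions coincide on the image of the canonical map; classes coming from $\HH^*(A,A)$ therefore commute with arbitrary Yoneda composition.

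The principal obstacle is the identification of $h$ in the second paragraph: verifying that the geometric edge homomorphism of the CJY spectral sequence corresponds precisely to the algebraic augmentation-induced map on Hochschild objects is the technical heart of the cited reference [FTV], and requires a careful comparison of the skeletal filtration of $LM$ (used to build the CJY spectral sequence) with the bar filtration of the Hochschild complex (used to compute $\HH^*(C^*M)$).
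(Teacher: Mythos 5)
The paper does not prove this statement at all: it is quoted as a result of [FTV], and your proposal is essentially a reconstruction of the argument in that reference, so the approach is the ``same'' one. Identifying $\HL_*(M;\F_p)$ with $\HH^*(C^*(M;\F_p))$, identifying $H_*(\Om M;\F_p)$ with $\Ext^*_{C^*(M;\F_p)}(\F_p,\F_p)$ via Eilenberg--Moore, matching $h$ with the augmentation-induced characteristic homomorphism, and then invoking the Gerstenhaber-type fact that this homomorphism lands in the centre of the Yoneda algebra is exactly the route of F\'elix--Thomas--Vigu\'e-Poirrier. Two small caveats: the cited [FTV] is the rational paper, and over $\F_p$ the ring isomorphism $\HL_*(M;\F_p)\iso\HH^*(C^*(M;\F_p))$ requires the mod~$p$ version of the Cohen--Jones/F\'elix--Thomas--Vigu\'e-Poirrier theorem for simply connected closed oriented manifolds (the BV structure fails mod~$p$, but the ring structure is fine, which is all you use); and the characteristic homomorphism lands in the \emph{graded} centre (commutation up to the Koszul sign), which is the form of centrality actually used in Lemma~2.3.
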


To prove Theorem 2.1 we simply combine the previous three results.

\begin{proof}[Proof of Theorem 2.1]
The kernel of $h\colon \HL_*(M; \F_p) \to H_*(\Om M; \F_p)$ is a nilpotent ideal, and the image of $h$ is contained in the centre of $H_*(\Om M; \F_p)$.  So if $\HL_*(M; \F_p)$ contains a polynomial algebra on $k$ generators, then so does the centre of $H_*(\Om M; \F_p)$.  On the other hand, if the centre of $H_*(\Om M; \F_p)$ contains the polynomial algebra $\F_p[x_1, \dots , x_k]$, then Lemma 2.3 shows that every element of the sub-algebra of the $E_2$-term of the CJY spectral sequence
$$
\F_p[(x_1)^{p^{n-2}}, \dots, (x_k)^{p^{n-2}}] \subset H_*(\Om M; \F_p) = E_2^{0, *}
$$
is an infinite cycle.   It follows that $\HL_*(M; \F_p)$ contains a polynomial algebra on $k$ generators.
\end{proof}

\section{Applications of Theorem 2.1}

\subsection{Sphere bundles over spheres}
Let $Q = Q_{2n, e}$ denote the sphere bundle
$$
S^{2n-1} \to Q \to S^{2n}
$$
with Euler class $e \in \Z$.   We choose an orientation of $S^{2n}$ to identify the Euler class with an integer.  We prove the following result.

\begin{proposition}
If $e \neq 0$, for any metric on $Q = Q_{2n, e}$, there is an infinite number of closed geodesics on $Q$.
\end{proposition}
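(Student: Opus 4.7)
The plan is to apply Theorem 1.2 at a prime $p$ dividing $e$. Such a $p$ exists whenever $|e| \geq 2$, which is the principal case I treat; the case $|e| = 1$ (where $Q$ is an integral cohomology sphere of dimension $4n-1$) and the case $n = 1$ with $|e| \geq 2$ (where $\pi_1(Q) = \Z/|e|$) both escape the reach of Theorem 1.2 and form the main obstacle. For $n \geq 2$ the long exact homotopy sequence of the bundle shows that $Q$ is a closed simply connected manifold of dimension $4n - 1$.

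First I would compute $H^*(Q; \F_p)$ from the Gysin sequence
$$\cdots \to H^{k-2n}(S^{2n}; \F_p) \xrightarrow{\cup e} H^k(S^{2n}; \F_p) \to H^k(Q; \F_p) \to H^{k-2n+1}(S^{2n}; \F_p) \to \cdots$$
Because $p \mid e$, every $\cup e$ map vanishes, and the sequence breaks into short exact pieces, leaving exactly one copy of $\F_p$ in each of the degrees $0$, $2n-1$, $2n$, and $4n-1$. If $H^*(Q; \F_p)$ were generated by a single element of positive degree $d$, every non-zero class would live in a degree divisible by $d$; the non-zero classes in the coprime degrees $2n-1$ and $2n$ then force $d = 1$, contradicting $H^1(Q; \F_p) = 0$. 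Hence $H^*(Q; \F_p)$ is not generated by one element.

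Next I would bound the growth of $H_*(\Omega Q; \F_p)$ using the Serre spectral sequence of the looped fibration $\Omega S^{2n-1} \to \Omega Q \to \Omega S^{2n}$. Each looped sphere has Poincar\'e series of the form $(1 - t^{m-1})^{-1}$, so the $E^2$-page has Poincar\'e series
$$\frac{1}{(1 - t^{2n-2})(1 - t^{2n-1})},$$
whose coefficients grow linearly and whose partial sums through degree $N$ are $O(N^2)$. Differentials can only decrease dimensions, so the same bound holds for $H_*(\Omega Q; \F_p)$, which therefore has polynomial growth. Combined with the previous paragraph, Theorem 1.2 yields infinitely many geometrically distinct closed geodesics on $Q$ for any metric, completing the argument when $n \geq 2$ and $|e| \geq 2$. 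The hard part will be the exceptional cases flagged above, which appear to require input beyond the machinery developed in this paper.
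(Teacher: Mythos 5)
Your argument for the main case ($n \geq 2$, $|e| \geq 2$) is correct, but it is routed differently from the paper's. The paper does not invoke Theorem 1.2 here at all: it computes $H_*(\Om Q;\F_p)$ explicitly as $P[u_{2n-2},v_{2n-1}]$ (a standard Serre spectral sequence calculation with the loop fibration, the same one you use only for a growth estimate), observes that its centre contains a polynomial algebra on two generators ($P[u_{2n-2},v_{2n-1}]$ itself for $p$ odd, $P[u_{2n-2},v_{2n-1}^2]$ for $p=2$), and then applies Theorem 2.1 plus Gromoll--Meyer directly. Your version instead verifies the two hypotheses of Theorem 1.2 --- non-monogenic $\F_p$-cohomology via the Gysin sequence and a coprime-degrees argument, and polynomial growth of $H_*(\Om Q;\F_p)$ via an upper bound on the $E^2$-page --- and lets the heavy machinery (McCleary's theorem and the elliptic Hopf algebra structure theory of \S4) do the rest. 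What the paper's route buys is independence from \S4 and from Theorem 1.5: since the loop space homology is completely known, one sees the central polynomial subalgebra with bare hands. What your route buys is that you never need the full multiplicative structure of $H_*(\Om Q;\F_p)$, only a growth bound; the price is that you are using a much bigger theorem than necessary. Both are legitimate.

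On the exceptional cases you flag: you are right that $|e|=1$ (where $Q$ is a homotopy sphere, possible only for $2n = 2,4,8$) and $n=1$ (where $Q$ is a lens space, not simply connected) are not reached by this method, and you should note that the paper's own proof silently makes the same assumptions --- ``choose a prime $p$ dividing $e$'' already presupposes $|e|\geq 2$, and the displayed answer for $H_*(\Om Q;\F_p)$ presupposes $n \geq 2$. So this is a looseness in the statement of the proposition rather than a defect of your argument relative to the paper's; in the intended application ($V_2(\R^{2n+1})$, Euler class $2$, $n\ge 2$) neither degenerate case arises.
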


Notice that a sphere bundle over a sphere $M$ not of the form $Q_{2n,e}$ with $e \neq 0$ has its rational cohomology ring generated by more than one element.  Therefore the theorem of Sullivan and Vigu\'e - Poirrier, Theorem 1.4 shows that  any metric on $M$ has an infinite number of closed geodesics.  However, $Q_{2n,e}$ is a rational homology sphere if $e \neq 0$.

\begin{proof}[Proof of Proposition 3.1]
Choose a prime $p$ such that $p$ divides $e$.  Standard basic calculations in algebraic topology show that 
$$
H^*(Q; \F_p) = E[a_{2n-1}, b_{2n}], \mbox{\rm\ and\ }
H_*(\Om Q; \F_p) = P[u_{2n-2}, v_{2n-1}].
$$
Here $E$ denotes the exterior algebra over $\F_p$ and $P$ the polynomial algebra over $\F_p$.  The subscripts are the degrees of the elements.  If $p = 2$, then the algebra $P[u_{2n-2}, v_{2n-1}]$ is not graded commutative since $v_{2n-1}^2 \neq 0$.  However the centre of $H_*(\Om Q; \F_p)$ is precisely $P[u_{2n-2}, v_{2n-1}^2]$.  Theorem 2.1 shows that $\HL_*(Q)$ contains a polynomial algebra on two generators and so $H_*(LQ; \F_p)$ has doubly infinite dimension.  The Gromoll-Meyer theorem shows that for any metric on $Q$, there is an infinite number of distinct closed geodesics.  
\end{proof}

\subsection{The Grassmannian of oriented two planes in $\R^{2n+1}$}
Let $G_2^+(\R^{2n+1})$ denote the Grassmannian of oriented $2$-planes in $\R^{2n+1}$.  Recall the following two calculations from the theory of characteristic classes.
\begin{enumerate}
\item
Suppose $2$ is a unit in the coefficient field  $\F$.  Then
$$
H^*(G_2^+(\R^{2n+1}); \F) = P[x_2]/(x_2^{2n}),
$$
\item
\qquad \qquad \qquad \qquad \  \ \,$
H^*(G_2^+(\R^{2n+1}); \F_2) = P[x_2]/(x_2^{n})\o E(y_{2n}).
$
\end{enumerate}
So the algebra $H^*(G_2^+(\R^{2n+1}); \F_p)$ can be generated by a single generator for $p\neq 2$, but in the
case $p = 2$ it requires at least two generators.
Another standard calculation in algebraic topology shows that
$$
H_*(\Om G_2^+(\R^{2n+1}); \F_2) = E(u_1) \o P[v_{2n-2}] \o P[w_{2n-1}] \cong H_*(\Om(\CP^n \times S^{2n}); \F_2).
$$
Evidently this contains a central polynomial algebra generated by two elements.  The following theorem 
follows from the Gromoll - Meyer theorem in the case of $\F_2$  coefficients.

\begin{theorem}
Any metric on $G_2^+(\R^{2n+1})$ has an infinite number of closed geodesics.
\end{theorem}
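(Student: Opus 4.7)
The plan is to apply Theorem 2.1 with $p=2$ and $k=2$ to the Grassmannian and then invoke the Gromoll--Meyer theorem, exactly as in the sphere-bundle case treated in Proposition 3.1. Since $G_2^+(\R^{2n+1})$ is closed, simply connected, and oriented, both Theorem 2.1 and Theorem 1.3 apply.

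The one thing to check is that the centre of the Pontryagin algebra $H_*(\Om G_2^+(\R^{2n+1}); \F_2)$ contains a polynomial subalgebra on two generators. Starting from the stated Pontryagin-algebra isomorphism
$$
H_*(\Om G_2^+(\R^{2n+1}); \F_2) \iso E(u_1) \o P[v_{2n-2}] \o P[w_{2n-1}],
$$
identified with the Pontryagin algebra of a product of loop spaces, I would observe that each tensor factor is itself a commutative Pontryagin algebra: $E(u_1) = H_*(S^1; \F_2)$, $P[v_{2n-2}] = H_*(\Om S^{2n-1}; \F_2)$, and $P[w_{2n-1}] = H_*(\Om S^{2n}; \F_2)$, the last one via the James model $\Om S^{2n} \simeq \Om\Sigma S^{2n-1}$. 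Because a tensor product of commutative algebras over $\F_2$ is commutative, the whole algebra is commutative, so its centre is the entire algebra, and in particular it contains the polynomial subalgebra $\F_2[v_{2n-2}, w_{2n-1}]$ on two generators.

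Theorem 2.1 then produces a polynomial subalgebra on two generators inside $\HL_*(G_2^+(\R^{2n+1}); \F_2)$, so $H_*(LG_2^+(\R^{2n+1}); \F_2)$ has doubly infinite dimension with $\F_2$ coefficients. The Gromoll--Meyer theorem (Theorem 1.3) finally delivers infinitely many geometrically distinct closed geodesics for any metric on $G_2^+(\R^{2n+1})$.

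No serious obstacle arises: the Grassmannian case is cleaner than the sphere-bundle case because the loop space has, at the level of $\F_2$ homology, the Pontryagin algebra of an actual product, so one never needs to pass to $w_{2n-1}^2$ in order to stay inside the centre. The only subtle point, already absorbed into the calculation quoted just before the theorem, is that the isomorphism above is genuinely one of Pontryagin algebras and not merely of graded vector spaces.
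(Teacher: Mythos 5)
Your proof is correct and takes essentially the same route as the paper: both arguments extract a central polynomial subalgebra on two generators from the stated Pontryagin-algebra computation of $H_*(\Om G_2^+(\R^{2n+1}); \F_2)$, feed it into Theorem 2.1 to conclude that $H_*(LG_2^+(\R^{2n+1}); \F_2)$ is doubly infinite, and finish with the Gromoll--Meyer theorem. Your observation that the whole loop-space algebra is commutative over $\F_2$ (so, unlike the sphere-bundle case, no passage to the square of the odd-degree generator is needed) merely makes explicit what the paper dismisses with ``evidently''.
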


\subsection{The list of examples from [McCZ]}
There is a list in [McCZ], based on the work of [O63], 
consisting of one representative from each diffeomorphism class of homogeneous spaces $G/K$, where $G$ is a compact connected Lie group and $K$ is a connected closed subgroup, with two properties:
\begin{itemize}
\item
$G/K$ is not diffeomorphic to a sphere,  a real, complex, or quaternionic projective space, nor is it diffeomorphic to the Cayley projective plane.
\item
The algebra $H^*(G/K; \Q)$ is generated by one element;
\end{itemize}
In other words it is the list of examples of homogeneous spaces to which we we would like to apply the theorem of Gromoll - Meyer, but cannot do so over the ground field $\Q$.
This list contains two infinite families. 
\begin{itemize}
\item
The Stiefel manifold $V_2(\R^{2n+1})$ of two frames in $\R^{2n+1}$.  This is a $2n-1$ sphere bundle over $S^{2n}$ with Euler class $2$, and Proposition 3.1 shows  that any metric on $V_2(\R^{2n+1})$ has an infinite number of geometrically distinct closed geodesics. 
\item
The Grassmannian of oriented $2$-planes in $\R^{2n+1}$.  Theorem 3.2 shows that any metric on this manifold has an infinite number of geometrically distinct closed geodesics.
\end{itemize}

There are another $7$ homogeneous spaces on this list.  The first two are $SU(2)/SO(3)$ and $Sp(2)/SU(2)$, and the other $5$ are homogeneous spaces for $G_2$.  It is possible to go through these $7$ examples by direct calculations with loop spaces.  However we will deal with them in \S 5 as examples of our main theorem.

\def\Hopfquot{/\hskip -3pt/}

\section{The proofs of Theorem 1.1 and Theorem 1.2}

We next need results contained in a series of inter-related papers by F\'elix, Halperin, Lemaire, and Thomas on the homology of based loop spaces.  We give a brief summary of the results we need.

\subsection{Elliptic Hopf algebras}
Let $\Gamma$ be a graded Hopf algebra over the ground field $\F$.  The {\em lower central series} of  $\Gamma$ is the sequence 
$$
\Gamma = \Gamma^{(0)}  \supset \Gamma^{(1)} \supset \Gamma^{(2)} \supset \cdots \supset \Gamma^{(n)} \supset \cdots
$$
where
$\Gamma^{(i+1)} = [\Gamma, \Gamma^{(i)}]$. 
By definition $\Gamma$ is {\em nilpotent} if $\Gamma^{(s)} = \F$ for some $s$.  Although the definition of the $\Gamma^{(i)}$ depends only on the algebra structure of $\Gamma$, it is straightforward to check that the $\Gamma^{(i)}$ are normal sub Hopf algebras of $\Gamma$.

We say that $\Gamma$ is {\em connected} if $\Gamma_i = 0$ when $i < 0$ and $\Gamma_0 = \F$, and that $\Gamma$ is {\em finitely generated} if it is finitely generated as an algebra.  From  [FHT91a] we have the following definition.

\begin{definition}  
Fix a ground field $\F$.  A Hopf algebra $\Gamma$ over $\F$ is {\bf elliptic} if it is connected, co-commutative, finitely generated, and nilpotent.
\end{definition}

Note that the only part of the definition of an elliptic Hopf algebra which refers to the coproduct is the condition that it is co-commutative.  

Here are some examples.  In these examples we assume that the Hopf algebras in question are connected and co-commutative over a fixed ground field $\F$.
\begin{enumerate}
\item
If $\Gamma$ is a finite dimensional Hopf algebra, then $\Gamma$ is elliptic.  To prove this first note that since $\Gamma$ is connected $\Gamma^{(i)}$ is $i+1$ connected.  Since $\Gamma$ is finite dimensional it  follows that $\Gamma^{(i)} = \F$ for sufficiently large  $i$.  So $\Gamma$ is nilpotent.  Since $\Gamma$ is finite, it is finitely generated.
\item
If $\Gamma$ is commutative, then $\Gamma$ is elliptic if and only if $\Gamma$ is finitely generated.
\item 
Let $L$ be a  Lie algebra.  Let $U(L)$ be the universal enveloping algebra of $L$.  This becomes a Hopf algebra by defining the coproduct to be the unique coproduct which makes the elements of $L$ primitive.  Then $U(L)$ is an elliptic Hopf algebra if and only if $L$ is a finitely generated nilpotent Lie algebra.
\end{enumerate}

The structure theorem for elliptic Hopf algebras proved in  [FHT91a]  tells us that essentially these examples generate the class of all elliptic Hopf algebras by taking extensions.

\begin{theorem}
Let $\F$ be a field and let $\Gamma$ be a connected, finitely generated, co-commutative Hopf algebra over $\F$. 
\begin{itemize}
\item  
If $\F$ has characteristic zero, then $\Gamma$ is elliptic if and only if $\Gamma = U(L)$ where $L$  is a finitely generated, nilpotent Lie algebra over $\F$.
\item
If $\F$ has characteristic $p \neq 0$, then $\Gamma$ is elliptic if and only if  it contains a finitely generated, central sub Hopf algebra $C$, such that $\Gamma\Hopfquot C$ is a finite dimensional algebra. 
\end{itemize}
\end{theorem}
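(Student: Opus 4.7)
The plan is to reduce both halves of the statement to the Milnor--Moore structure theorem combined with an induction on nilpotency class, following the strategy of [FHT91a].

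In characteristic zero, Milnor--Moore gives $\Gamma = U(L)$ where $L$ is the Lie algebra of primitives of $\Gamma$, so what remains is to transfer the finiteness conditions. Finite generation of $U(L)$ as an algebra is equivalent to finite generation of $L$ as a Lie algebra, because the indecomposables of $U(L)$ recover $L/[L,L]$; and nilpotence of $U(L)$ as a Hopf algebra is equivalent to nilpotence of $L$ as a Lie algebra, by checking via a PBW argument that the two lower central series correspond on primitives and that arbitrary commutators in $U(L)$ are generated by iterated Lie brackets of primitives.

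In characteristic $p$, the sufficiency direction is the easier half: given a central, finitely generated sub Hopf algebra $C \subseteq \Gamma$ with $\Gamma\Hopfquot C$ finite dimensional, $\Gamma$ is a finitely generated $C$-module and hence finitely generated as an algebra; the finite dimensional quotient is nilpotent by example (1) of the paper, and centrality of $C$ lets the iterated commutator filtration of $\Gamma$ collapse onto the filtration of the quotient, so $\Gamma$ is nilpotent. The care needed is to see that central factors of $C$ really do factor out of iterated brackets, which uses both centrality and the finiteness of the quotient in an essential way.

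The main obstacle, and the actual content of the theorem, is necessity in characteristic $p$. I would induct on the nilpotency class $s$ of $\Gamma$. The last non-trivial term $\Gamma^{(s-1)}$ is central in $\Gamma$ (since $[\Gamma,\Gamma^{(s-1)}] = \F$) and a normal sub Hopf algebra, so the quotient $\bar\Gamma := \Gamma\Hopfquot\Gamma^{(s-1)}$ is elliptic of smaller nilpotency class; by induction it contains a finitely generated central sub Hopf algebra $\bar C$ with $\bar\Gamma\Hopfquot\bar C$ finite dimensional. The delicate step is to lift $\bar C$ to a central sub Hopf algebra $C$ of $\Gamma$ with $\Gamma\Hopfquot C$ still finite dimensional. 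This is where the characteristic $p$ hypothesis enters decisively: Borel's structure theorem for connected commutative graded Hopf algebras over a perfect field says that any such algebra decomposes as a tensor product of monogenic pieces (polynomial or truncated polynomial), so inside the centre of $\Gamma$ one can split off polynomial generators corresponding to the polynomial generators of $\bar C$ while the truncated and exterior pieces are absorbed into the finite dimensional quotient. Finite generation of the resulting $C$, and finite dimensionality of $\Gamma\Hopfquot C$, then follow from noetherian arguments for central subalgebras of finitely generated Hopf algebras over $\F_p$, which is the step where co-commutativity and positive characteristic are both genuinely used.
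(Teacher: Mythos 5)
First, a point of comparison: the paper does not actually prove this statement. It is imported from [FHT91a]; the only arguments the paper supplies are the equivalence of its formulation with Theorem~B(ii) of that reference (via Milnor--Moore and Borel) and a proof in the special case where $\Gamma$ is commutative. So your proposal has to be judged as a reconstruction of the proof in [FHT91a]. Your characteristic-zero discussion and the sufficiency direction in characteristic $p$ are fine in outline.

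The genuine gap is in the necessity direction in characteristic $p$, at exactly the step you yourself flag as delicate: lifting the central sub Hopf algebra $\bar C \subseteq \bar\Gamma = \Gamma/\!\!/\Gamma^{(s-1)}$ to a central sub Hopf algebra of $\Gamma$. An element of $\Gamma$ whose image in $\bar\Gamma$ is central need only commute with everything modulo $\Gamma^{(s-1)}$; it need not be central in $\Gamma$. Neither Borel's theorem (a statement about the internal algebra structure of a commutative Hopf algebra, not about how it sits inside a noncommutative one) nor unspecified noetherian arguments produce central lifts. The mechanism that actually closes the induction in [FHT91a] is the Frobenius: for a commutative sub Hopf algebra $A = \F_p[u_1,\dots,u_s]$ the $p$-th power map $\xi(x)=x^p$ preserves sums, products and coproducts, so $\xi^n(A)=\F_p[u_1^{p^n},\dots,u_s^{p^n}]$ is again a polynomial sub Hopf algebra with $A/\!\!/\xi^n(A)$ finite dimensional, and nilpotency of $\Gamma$ forces the adjoint action $\mathrm{Ad}(u_i^{p^n})$ to vanish for $n$ large (Lemma~2.4 of [FHT91a]), i.e.\ the $p^n$-th powers become genuinely central in $\Gamma$. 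Replacing the would-be lift of $\bar C$ by its image under a sufficiently high power of $\xi$ is what makes $C$ central while keeping $\Gamma/\!\!/C$ finite dimensional; this is the decisive use of positive characteristic, and it is absent from your argument. (Two secondary remarks: the induction in [FHT91a] is run on the length of an elliptic generating system rather than on nilpotency class; and your version also needs to know that the preimage of $\bar C$ in $\Gamma$ is finitely generated, which is not automatic for sub Hopf algebras and requires its own argument.)
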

The statement of the second clause of the theorem is not quite the same as the statement (ii) in Theorem~B of [FHT91a] but it is easily seen to be equivalent to it.  From [MM65] we know that $\Gamma$ is isomorphic to $C \o \Gamma\Hopfquot C$ as a $C$ algebra.  Since $C$ is finitely generated and commutative it follows from a theorem of Borel [MM65] that as an algebra $C$ is isomorphic to $P\o A$, where $P$ is a polynomial algebra over $\F$ in a finite number of variables and $A$ is a finite dimensional algebra. It follows that $\Gamma$ is isomorphic to $P \o A \o \Gamma\Hopfquot C$ as a $P$ module.  Since both $A$ and $\Gamma\Hopfquot C$ are finite dimensional it follows that $\Gamma$ is a finitely generated free module over $P$.  This is the condition given in [FHT91a].

\subsection{Depth and the Gorenstein condition}

Let $A$ be a graded augmented algebra over the ground field $\F$.  We will assume that $A$ is connected.  We can form the  vector spaces
$$
\Ext^{i,j}_A(\F, A).
$$
The {\em depth} of $A$,  $\depth A$, is defined as follows:
$$
\depth A = \inf \{s \mid \Ext^{s, *}_A(F, A) \neq 0 \}.
$$
If $n = \depth A$, then $\Ext ^{s,t}_A(\F, A) = 0$ for $s < n$ and there is an integer $t$ such that $\Ext^{n, t}_A(\F,A) \neq 0$.  In particular the depth of $A$ could be infinite, and this means that $\Ext ^{s,t}_A(\F, A) = 0$ for all $(s,t)$.

The graded algebra $A$ is {\em Gorenstein} if there is a pair of integers $(n,m)$ such that 
\begin{itemize}
\item 
$\Ext_A^{s,t}(\F,A) = 0$ if $(s,t) \neq (n,m)$,
\item
$\Ext_A^{n,m}(\F,A) = \F$.
\end{itemize}
The definition of depth and the Gorenstein condition first appear in classical commutative ring theory.  

It is straightforward to check that,
\begin{itemize}
\item
$\depth A\o B = \depth A + \depth B$,
\item
$A \o B$ is Gorenstein if and only if both $A$ and $B$ are Gorenstein.
\end{itemize}

In the case of a polynomial algebra $\F[x]$ with one generator of degree $k$
$$
\Ext^{1,k}_{\F[x]}(k, \F[x]) = \F, \quad \Ext^{s,t}_{\F[x]}(\F, \F[x]) = 0  \quad (s,t) \neq (1, k). 
$$
In the case where $A = \F[x]/(x^n)$ is a truncated polynomial algebra with generator of degree $k$ then
$$
\Ext^{0, -k(n-1)}_A(\F, A) = \F, \quad \Ext^{s,t}_A(\F, A) = 0 \quad (s,t) \neq (0, -k(n-1)).
$$
The most elementary method for doing these calculations is to use the minimal resolution of $\F$ over $\F[x]$ and the minimal resolution of $\F$ over $\F[x]/(x^n)$.   
It follows that both the algebras $\F[x]$ and $\F[x]/(x^n)$ are Gorenstein, and  
$$
\depth \F[x] = 1, \quad \depth \F[x]/(x^n) = 0.
$$

The following lemma is contained in [FHT88] (see Proposition~1.7).

\begin{lemma}
Suppose $A$ is an infinite tensor product of algebras.  Then the depth of $A$ is infinite.
\end{lemma}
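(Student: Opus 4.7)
My plan is to exhaust the infinite tensor product by finite truncations and leverage the finite additivity of depth, $\depth(A \otimes B) = \depth A + \depth B$, recorded just before the lemma.

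Write $A = \bigotimes_{i=1}^\infty A_i$ and, for each integer $n \geq 1$, decompose $A = B_n \otimes C_n$ with $B_n = A_1 \otimes \cdots \otimes A_n$ and $C_n = \bigotimes_{i > n} A_i$. Granting an extension of additivity to this decomposition, one reads off
$$
\depth A \;=\; \depth B_n + \depth C_n \;\geq\; \depth B_n \;=\; \sum_{i=1}^n \depth A_i.
$$
Under the hypothesis implicit in [FHT88] that each factor $A_i$ is a nontrivial connected graded algebra whose augmentation ideal contains a non-zero-divisor --- as holds for polynomial algebras on positive-degree generators, the shape that arises from the structure theorem for elliptic Hopf algebras --- one has $\depth A_i \geq 1$. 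The right-hand side is then at least $n$ for every $n$, forcing $\depth A = \infty$.

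The main obstacle is that the cited additivity formula has been proved only for finite tensor products, while our splitting leaves an infinite tensor product $C_n$ in one factor. I would side-step this by computing $\Ext_A^{*,*}(\F, A)$ directly from a tensor-product resolution. Choose a minimal $A_i$-free resolution $P_i \to \F$ for each $i$; then $P = \bigotimes_i P_i$ is an $A$-free resolution of $\F$, and $\Hom_A(P, A)$ factors as the tensor product of the cochain complexes $\Hom_{A_i}(P_i, A_i)$. Any nonzero cohomology class of $\Hom_A(P, A)$ is already supported on some finite sub-tensor-product $B_n$, so its homological degree is at least $\sum_{i=1}^n \depth A_i \geq n$. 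Since $n$ is arbitrary, no nontrivial class can live in any finite homological degree, and $\depth A = \infty$.
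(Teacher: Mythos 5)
There is a genuine gap: both versions of your argument bottom out in the inequality $\depth A_i \geq 1$ for every factor $A_i$, and this is neither a hypothesis of the lemma nor true in the situation where the lemma is used. The paper computes $\depth \F[x]/(x^n) = 0$ a few lines earlier, and the lemma is invoked precisely for the Borel decomposition of a non--finitely-generated commutative Hopf algebra, whose tensor factors are polynomial \emph{and truncated polynomial} algebras. For an infinite exterior algebra $\bigotimes_{i}\F[x_i]/(x_i^2)$ your estimate gives $\depth A \geq \sum_{i=1}^n \depth A_i = 0$ for every $n$, which proves nothing, yet the lemma asserts the depth is still infinite. The clause you describe as a ``hypothesis implicit in [FHT88]'' (each augmentation ideal contains a non-zero-divisor) is exactly the missing ingredient, and it fails for the factors that actually occur; the same defect sinks the second, resolution-based version, whose final bound is again $\sum_{i=1}^n \depth A_i \geq n$.

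The repair, which is how [FHT88, Proposition~1.7] and the paper argue, is not to peel off the factors one at a time but to partition the infinitely many factors into $n$ blocks $A = C_1 \otimes \cdots \otimes C_n$, \emph{each of which is itself an infinite tensor product}. An infinite tensor product of nontrivial connected algebras has zero socle: a nonzero element is supported on finitely many factors, so multiplying by a positive-degree element of an unused factor does not kill it. Hence $\Ext^0_{C_j}(\F,C_j)=0$, i.e.\ $\depth C_j \geq 1$, for every block --- regardless of the depths of the individual $A_i$. The K\"unneth isomorphism $\Ext_A(\F,A)\cong \bigotimes_{j=1}^n \Ext_{C_j}(\F,C_j)$ then gives $\Ext^i_A(\F,A)=0$ for $i<n$, and since $n$ was arbitrary the depth is infinite. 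Your tensor-product-of-minimal-resolutions computation is a fine vehicle for the K\"unneth step, but it must be fed the ``each infinite block has positive depth'' input rather than a per-factor depth bound.
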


Suppose $\Gamma$ is a connected Hopf algebra that is commutative as an algebra.  By a theorem of Borel [MM65, Theorem  7.11], it follows that 
$\Gamma$ is isomorphic as an algebra to a tensor product of polynomial algebras and truncated polynomial algebras.  If $\Gamma$ is not finitely generated then Lemma 4.3 shows that $\Gamma$ has infinite depth.  If $\Gamma$ is finitely generated, then it has finite depth and it is isomorphic to $P \o A$ where $P$ is a polynomial algebra with $m = \depth \Gamma$ variables and $A$ is a finite tensor product of truncated polynomial algebras.  This proves Theorem 4.2 in the case where $\Gamma$ is  commutative.  One way to think of the proof of 4.2 is that it works by reducing the general case to the commutative case by using the condition that $\Gamma$ is nilpotent.

The relevance of depth and Gorenstein to topology comes from results in [FHLT89] and [FHT88], which we state as the following theorem.

\begin{theorem}
Let $X$ be a simply connected finite complex.
\begin{enumerate}
\item
The Hopf algebra $H_*(\Om X;\F)$ has finite depth. In fact, $\depth X \leq \mbox{\rm LSCat}\, X$, where $\mbox{\rm LSCat}\, X$ denotes
the Lyusternik-Schnirelman category of $X$.
\item
If the Hopf algebra $H_*(\Om X;\F)$ is Gorenstein, then $X$ is a Poincar\'e duality space.
\end{enumerate}
\end{theorem}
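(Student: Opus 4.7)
My plan is to prove both parts by combining the Eilenberg--Moore / cobar machinery for the path--loop fibration $\Om X \to PX \to X$ with a geometric model of Lusternik--Schnirelmann category.

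For part (1), I would realise $\mathrm{LSCat}\,X \leq n$ via the Ganea construction: the $n$-th Ganea fibration $p_n \colon G_n(X) \to X$ admits a homotopy section precisely when $\mathrm{LSCat}\,X \leq n$. Pulling back $\Om X \to PX \to X$ and using the iterated fibrewise joins that build $G_n(X)$, the section produces, on the level of $C_*(\Om X)$-modules, a resolution of $\F$ of length at most $n$ by sums of copies of $C_*(\Om X)$. This forces $\Ext^{s,*}_{H_*(\Om X)}(\F, H_*(\Om X)) = 0$ for $s > n$. The finiteness of $X$ ensures that $\Ext$ does not vanish identically, and one concludes $\depth H_*(\Om X; \F) \leq \mathrm{LSCat}\,X$.

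For part (2), I would use the identification
$$
H^*(X; \F) \;\iso\; \Ext^{*,*}_{C_*(\Om X)}(\F, C_*(\Om X))
$$
(with an appropriate regrading) coming from the Eilenberg--Moore / cobar spectral sequence applied to the path--loop fibration. If $H_*(\Om X; \F)$ is Gorenstein, the right-hand side is one-dimensional, concentrated in a single bidegree $(n,m)$. Hence $H^*(X; \F)$ has a single top class in total degree $N = n + m$. To upgrade this to full Poincar\'e duality, I would exploit the Hopf algebra structure on $H_*(\Om X)$: the module-theoretic local duality provided by the Gorenstein condition yields, under the Eilenberg--Moore correspondence, that cap-product with the top class induces an isomorphism $H^*(X) \to H_{N-*}(X)$, by comparing spectral sequences with coefficients in $C_*(\Om X)$ and in its dual module.

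The main obstacle in both parts is setting up the cobar / Eilenberg--Moore apparatus correctly on chains, so that multiplicative convergence and compatibility of module structures hold. In part (1), the subtle point is verifying that the Ganea section really produces a resolution, and not merely a filtration, of the requisite length. In part (2), the genuinely hard step is propagating the one-dimensional top class to a genuine cap-product duality; this is where the Gorenstein hypothesis on the Hopf algebra is fully used, via a module-theoretic local duality translated through the Eilenberg--Moore isomorphism.
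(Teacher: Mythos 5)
First, note that the paper does not prove this statement at all: Theorem 4.4 is quoted from [FHLT89] and [FHT88], so the only meaningful comparison is with those sources. Your overall architecture for part (1) --- Ganea fibrations realising $\mbox{\rm LSCat}\,X\leq n$, translated into module--theoretic information over $C_*(\Om X)$ --- is indeed the strategy of the actual proof. But the specific mechanism you propose is too strong and in general false: a section of the $n$-th Ganea fibration does \emph{not} give a free resolution of $\F$ of length $\leq n$ over $C_*(\Om X)$ (equivalently, finite projective dimension of the trivial module). For $X=\CP^2$ one has $\mbox{\rm LSCat}\,X=2$ while $\Tor^{H_*(\Om X;\Q)}_s(\Q,\Q)\supseteq\Tor^{\Lambda(x_1)}_s(\Q,\Q)\neq 0$ for every $s$, so $\F$ has infinite projective dimension. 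What the Ganea section actually yields is that $\F$ is a \emph{retract}, in the derived category of $C_*(\Om X)$-modules, of a module carrying a filtration of length $n+1$ with free subquotients; the vanishing of $\Ext^s(\F,C_*(\Om X))$ for $s>n$ is extracted from that retraction, not from a resolution. Moreover you dismiss in one sentence the two genuinely hard points: (i) the non-vanishing of $\Ext_{C_*(\Om X)}(\F,C_*(\Om X))$ for a finite complex (this is essentially the existence of a nontrivial dualizing module and occupies most of [FHT88]); and (ii) the passage from the chain algebra $C_*(\Om X)$ to the homology Hopf algebra $H_*(\Om X;\F)$, which is what the theorem is actually about and requires the Milnor--Moore spectral sequence $\Ext_{H_*(\Om X)}(\F,H_*(\Om X))\Rightarrow\Ext_{C_*(\Om X)}(\F,C_*(\Om X))$.

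Part (2) contains a genuine error. The identification you start from,
$$
H^*(X;\F)\;\iso\;\Ext_{C_*(\Om X)}(\F,\,C_*(\Om X)),
$$
is false; the Eilenberg--Moore/Rothenberg--Steenrod theorem for the path--loop fibration gives $H^*(X;\F)\iso\Ext_{C_*(\Om X)}(\F,\F)$, with the \emph{trivial} module in the second slot. The group $\Ext_{C_*(\Om X)}(\F,C_*(\Om X))$ is the dualizing module, which for a Poincar\'e duality space is one-dimensional; for $X=S^3$ it is $\F$ concentrated in a single bidegree while $H^*(S^3;\F)$ is two-dimensional. With your identification, the Gorenstein hypothesis would force $H^*(X;\F)$ to be one-dimensional, which is absurd. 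The correct route (as in [FHT88]) is to use the one-dimensional class in $\Ext_A(\F,A)$ together with the composition/evaluation pairing against $\Tor^A(\F,\F)\iso H_*(X;\F)$, the finiteness of $X$ being what makes this pairing perfect; your closing remarks about cap product are in this spirit but cannot be run from the identification as you wrote it.
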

In [FHT88] F\'elix, Halperin and Thomas extend the Gorenstein condition to differential graded algebras and show that a finite complex $X$ is a Poincar\'e duality space if and only if the cochain algebra $S^*(X;\F)$ is a Gorenstein differential graded algebra.   While it is true that if $H^*(X;\F)$ is Gorenstein then so is $S^*(X;\F)$  the reverse implication is not true -- see Examples 3.3 of [FHT88].

If $X$ is a finite complex, then we know that $H_*(\Om X;\F)$ has finite type and finite depth.  The following theorem gives some useful practical ways to deduce, in addition, that $H_*(\Om X;\F)$ is elliptic.  For the proof see Theorem C of [FHT91a].

\begin{theorem}
Suppose $\Gamma$ is a connected, cocommutative Hopf algebra over $\F$ of finite type and $\Gamma$ has finite depth.  Then the following are equivalent:
\begin{enumerate}
\item $\Gamma$ is elliptic,
\item $\Gamma$ is nilpotent,
\item $\Gamma$ has polynomial growth,
\item $\Gamma$ is Gorenstein.
\end{enumerate}
\end{theorem}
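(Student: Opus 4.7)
The plan is to deduce the forward implications $(1) \Rightarrow (2), (3), (4)$ directly from the structure theorem (Theorem 4.2), and then close the cycle by proving each of $(2), (3), (4) \Rightarrow (1)$ using depth arguments combined with Borel's theorem.

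The implication $(1) \Rightarrow (2)$ is immediate from Definition 4.1. For $(1) \Rightarrow (3)$ and $(1) \Rightarrow (4)$, apply Theorem 4.2. In characteristic zero, $\Gamma = U(L)$ with $L$ a finitely generated nilpotent graded Lie algebra, and finite generation combined with nilpotence forces $L$ to be finite dimensional (all sufficiently iterated brackets of the generators vanish). The Poincar\'e-Birkhoff-Witt theorem then expresses $\Gamma$ as a symmetric algebra on $L^{\mathrm{even}}$ tensor an exterior algebra on $L^{\mathrm{odd}}$ at the level of graded vector spaces, establishing polynomial growth, and the Gorenstein property is inherited from the polynomial and exterior tensor factors. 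In characteristic $p$, Theorem 4.2 combined with Borel applied to the central sub Hopf algebra $C$ yields a decomposition $\Gamma \iso P \o A \o (\Gamma \Hopfquot C)$ as a $P$-module, where $P$ is a polynomial algebra on finitely many generators and both $A$ and $\Gamma\Hopfquot C$ are finite dimensional. Polynomial growth is then immediate, and the Gorenstein property follows by combining the explicit computations recorded in this section for $\F[x]$ and $\F[x]/(x^n)$ with the fact that a finite-dimensional connected Hopf algebra is Frobenius, hence Gorenstein, together with the tensor product principle for the Gorenstein condition.

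To close the cycle it suffices to establish $(2) \Rightarrow (1)$, $(3) \Rightarrow (2)$, and $(4) \Rightarrow (1)$. For $(2) \Rightarrow (1)$ I would use the lower central series $\Gamma = \Gamma^{(0)} \supset \dots \supset \Gamma^{(s)} = \F$, each of whose successive quotients is a connected commutative Hopf algebra. Borel's theorem writes each such quotient as a tensor product of polynomial and truncated polynomial factors; Lemma 4.3 forbids infinite tensor products once finite depth is in play, so the key step is to transfer the finite depth of $\Gamma$ to each quotient via a change-of-rings spectral sequence for the extension $\Gamma^{(i+1)} \to \Gamma^{(i)} \to \Gamma^{(i)}/\Gamma^{(i+1)}$, and then conclude by induction on $s$ that $\Gamma$ is finitely generated. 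For $(3) \Rightarrow (2)$ the cleanest approach is by contrapositive: if $\Gamma$ is not nilpotent, some subquotient of its lower central series contains a non-abelian free sub Lie algebra whose enveloping algebra has exponential growth, contradicting polynomial growth. Finally, for $(4) \Rightarrow (1)$ one chooses a maximal central sub Hopf algebra $C \subset \Gamma$ and runs the change-of-rings spectral sequence for $C \to \Gamma \to \Gamma \Hopfquot C$ to propagate the Gorenstein concentration in a single $\Ext$-bidegree from $\Gamma$ down to $C$ and to $\Gamma\Hopfquot C$; combined with finite depth and Borel, this forces $C$ to be finitely generated and $\Gamma\Hopfquot C$ to be finite dimensional, which is exactly the structure demanded by Theorem 4.2.

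The main obstacle is $(4) \Rightarrow (1)$. The Gorenstein hypothesis is purely homological, and converting bidegree concentration of $\Ext^{*,*}_\Gamma(\F, \Gamma)$ into structural information (finite generation and nilpotence of $\Gamma$) is delicate, especially in characteristic $p$ where the clean description $\Gamma = U(L)$ is unavailable. Producing the correct central sub Hopf algebra through which to run the change-of-rings argument, and controlling how the Gorenstein bidegree splits across the extension, is the technical heart of the proof.
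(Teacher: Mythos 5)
First, note that the paper does not prove this statement at all: it is quoted verbatim as Theorem~C of [FHT91a] ("For the proof see Theorem C of [FHT91a]"), so there is no in-paper argument to measure yours against; the comparison has to be with the F\'elix--Halperin--Thomas proof. Your forward implications are essentially fine: $(1)\Rightarrow(2)$ is definitional, and $(1)\Rightarrow(3),(4)$ do follow from the structure theorem (Theorem~4.2) together with Borel. One caveat: in characteristic $p$ the decomposition $\Gamma \cong P \o A \o (\Gamma/\!\!/C)$ is an isomorphism of $P$-modules only, not of algebras, so the tensor-product principle for the Gorenstein condition does not apply literally; what is actually needed is the Hopf-extension statement from [FHT88] that for $C \to \Gamma \to \Gamma/\!\!/C$ with $\Gamma$ free over $C$, $\Gamma$ is Gorenstein if and only if $C$ and $\Gamma/\!\!/C$ are.

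The genuine gaps are all in the reverse implications, and they are not cosmetic. For $(2)\Rightarrow(1)$ you need the finite depth of $\Gamma$ to be inherited by the terms $\Gamma^{(i)}$ of the lower central series and by the abelian quotients $\Gamma^{(i)}/\!\!/\Gamma^{(i+1)}$, so that Lemma~4.3 can exclude infinitely many Borel factors; but the inequality $\depth H \leq \depth \Gamma$ for a sub Hopf algebra $H$ is itself a substantive theorem of F\'elix--Halperin--Thomas, and the corresponding control for quotient Hopf algebras is not a formal consequence of ``a change-of-rings spectral sequence,'' so the main step of this direction is asserted rather than established. Your $(3)\Rightarrow(2)$ via a non-abelian free sub Lie algebra is a characteristic-zero picture: over $\F_p$ a connected cocommutative Hopf algebra need not be primitively generated, and the polynomial-versus-exponential growth dichotomy you are invoking is precisely the content of [FHLT89], not something read off from the lower central series. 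Most importantly, $(4)\Rightarrow(1)$ --- which you correctly identify as the technical heart --- remains a plan, not a proof: you do not show that a suitable maximal central sub Hopf algebra exists, that the concentration of $\Ext^{*,*}_\Gamma(\F,\Gamma)$ in one bidegree actually splits across the extension in the direction required, or that this forces $\Gamma/\!\!/C$ to be finite dimensional. Since the whole point of Theorem~C is these converses, the proposal as written reproduces the easy half and defers the hard half.
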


\subsection{The proof of Theorem 1.1.}
If $M$ is a closed, connected, oriented manifold of finite dimension, then $H_*(\Om M; \F_p)$ is connected and cocommutative, and it has finite type and finite depth.  We are assuming it has polynomial growth.  It follows from Theorem 4.5 that $H_*(\Om M; \F_p)$ is elliptic.  Therefore, from Theorem 4.2, it is a finitely generated free module over a central sub-algebra $P$ that is a polynomial algebra on a finite number, say $l$, variables.  It follows that $H_*(\Om M; \F_p)$ has polynomial growth with exponent $l$ and indeed $l$ is the minimal exponent which can occur in the inequality for polynomial growth.  In the notation of Theorem 1.1, $l = K_0$.  This proves Theorem 1.1.

\subsection{The proof of Theorem 1.2.} 
It follows from Theorem 4.2 that if $\Gamma$ is an elliptic Hopf algebra over $\F_p$, then $\Gamma$ is  doubly infinite if and only if the centre of $\Gamma$ contains a polynomial algebra on two generators.
Now let $M$ be a simply connected closed manifold satisfying the hypotheses of Theorem 1.2.  Then, as in the proof of Theorem 1.1, it follows that $H_*(\Om M; \F_p)$ is an elliptic Hopf algebra.  Suppose in addition that  the algebra ${H}^*(M; \F_p)$ cannot be generated by one element.   From Theorem 1.5, it follows that $H_*(\Om M; \F_p)$ is doubly infinite and so the centre of $H_*(\Om M; \F_p)$ contains a polynomial algebra on two generators.  By theorem 2.1 it follows that $\HL_*(M; \F_p)$ contains a polynomial algebra on two generators and therefore $H_*(LM, \F_p)$ is doubly infinite.  The Gromoll - Meyer theorem, Theorem 1.3, completes the proof.

\section{Application to homogeneous spaces.}

The following theorem is Example 3.2 in [FHT93]:

\begin{theorem}
Let $G$ be a simply connected, compact Lie group and $K$, a connected, closed subgroup of $G$.  
Then the homogeneous space $G/K$ is $\F_p$ elliptic for any prime $p$.
\end{theorem}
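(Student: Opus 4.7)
The plan is to show that $H_*(\Om(G/K);\F_p)$ is an elliptic Hopf algebra in the sense of Definition~4.1 -- this being what $\F_p$-ellipticity of a space means in this paper -- and to do so via the characterisation in Theorem~4.5. Since $G$ is simply connected and $K$ is connected, the long exact sequence for $K\to G\to G/K$ gives $\pi_1(G/K)=0$, so $G/K$ is a simply connected closed manifold, hence a simply connected finite CW complex, and $\Gamma := H_*(\Om(G/K);\F_p)$ is automatically connected, cocommutative, of finite type, and (by Theorem~4.4(1)) of finite depth. Theorem~4.5 then reduces ellipticity of $\Gamma$ to the single condition that $\Gamma$ has polynomial growth.

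To establish polynomial growth I would loop the principal fibration $K\to G\to G/K$ once to obtain the fibre sequence
$$\Om(G/K) \longrightarrow K \longrightarrow G,$$
realising $\Om(G/K)$ as the homotopy fibre of $K\hookrightarrow G$. Because $G$ is simply connected and of finite type, the Eilenberg--Moore spectral sequence
$$E_2 = \Tor_{H^*(G;\F_p)}^{*,*}\!\bigl(H^*(K;\F_p),\F_p\bigr) \;\Longrightarrow\; H^*(\Om(G/K);\F_p)$$
converges strongly. Both $H^*(G;\F_p)$ and $H^*(K;\F_p)$ are finite-dimensional, since $G$ and $K$ are compact Lie groups. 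By Borel's structure theorem, $A := H^*(G;\F_p)$ decomposes as a graded algebra into a finite tensor product of monogenic factors of type $E(x)$ or $\F_p[x]/(x^{p^n})$. The Künneth formula for $\Tor$ then turns $\Tor_A(\F_p,\F_p)$ into a tensor product of explicit finitely generated algebras -- a polynomial algebra from each exterior factor and an exterior-times-polynomial algebra from each truncated factor -- hence a finitely generated tensor product of polynomial and exterior algebras with polynomial growth. A minimal $A$-free resolution $P_\bullet \to \F_p$ shows that $\Tor_A(H^*(K;\F_p),\F_p) = E_2$ is bounded termwise by $\dim H^*(K;\F_p)$ times the graded dimensions of $\Tor_A(\F_p,\F_p)$, so $E_2$ itself has polynomial growth. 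Since $\dim E_\infty^{*,*}\le\dim E_2^{*,*}$ termwise, $H^*(\Om(G/K);\F_p)$ has polynomial growth; by finite-type duality so does $\Gamma$, and Theorem~4.5 concludes that $\Gamma$ is elliptic.

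The main obstacle is technical bookkeeping rather than conceptual difficulty: verifying strong convergence of the Eilenberg--Moore spectral sequence (standard since $G$ is simply connected of finite type) and carefully justifying the termwise Poincar\'e-series bound used to pass polynomial growth from $\Tor_A(\F_p,\F_p)$ to $\Tor_A(H^*(K;\F_p),\F_p)$. At $p=2$ one must allow polynomial generators of odd degree in Borel's decomposition, but the $\Tor$ of each monogenic factor is still a finitely generated algebra of polynomial growth, so the argument goes through unchanged. A more structural alternative would apply Theorem~4.2 directly by exhibiting a central finitely generated polynomial sub-Hopf algebra of $\Gamma$ with finite-dimensional quotient: the natural candidate is the image of the elliptic Hopf algebra $H_*(\Om G;\F_p)$ under the map induced by $\Om G\to\Om(G/K)$ from the looped sequence $\Om K\to\Om G\to\Om(G/K)$, but verifying centrality of this image requires a separate argument and seems less uniform than the Eilenberg--Moore route.
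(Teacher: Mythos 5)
Your proof is correct, but it runs through a different spectral sequence than the paper does. The paper (following Example 3.2 of [FHT93]) extends the fibration sequence in the other direction, to $\Om G \to \Om(G/K) \to K$, and applies the homology Leray--Serre spectral sequence with $E_2 = H_*(K;\F_p)\otimes H_*(\Om G;\F_p)$: since $H_*(K;\F_p)$ is finite-dimensional and $H_*(\Om G;\F_p)$ has polynomial growth (because $G$, hence $\Om G$, is elliptic), the abutment $H_*(\Om (G/K);\F_p)$ has polynomial growth, and the conclusion then follows exactly as in your last step, from finite depth plus Theorem 4.5. You instead take the segment $\Om(G/K)\to K\to G$ and use the Eilenberg--Moore spectral sequence $\Tor_{H^*(G;\F_p)}(H^*(K;\F_p),\F_p)\Rightarrow H^*(\Om(G/K);\F_p)$, feeding in Borel's decomposition of $H^*(G;\F_p)$ and the minimal-resolution bound $\dim\Tor_A(M,\F_p)\le \dim M\cdot\dim\Tor_A(\F_p,\F_p)$. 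The two arguments are close cousins: both reduce to the finiteness of the cohomology of the compact groups together with the polynomial growth of (what amounts to) $H_*(\Om G;\F_p)$, and both finish with Theorem 4.4(1) and Theorem 4.5. Your Eilenberg--Moore route has the small advantage that convergence only requires $G$ to be simply connected, whereas the Serre route must separately verify that $\pi_1(K)$ (which need not vanish) acts trivially on $H_*(\Om G;\F_p)$, a point the paper has to assert explicitly. One tiny correction: since $H^*(G;\F_p)$ is finite-dimensional, Borel's decomposition contains no genuinely polynomial factors at any prime; what changes at $p=2$ is only that the truncated factors $\F_2[x]/(x^{2^n})$ may have $x$ of odd degree, which, as you note, does not affect the growth of their Tor.
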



The proof uses the fibration
$$
\Om G \to \Om (G/K) \to K
$$
for which
the fundamental group $\pi_1(K)$ acts trivially on the groups $H_*(\Om G; F_p)$.
Then a Leray-Serre spectral sequence argument may be applied because $K$ and $\Om G$ are both elliptic and
hence have polynomial growth.



\commenter{
\begin{lemma}
The algebra $H_*(\Om G; \F_p)$ is a finite tensor product of polynomial algebras and truncated polynomial algebras.
\end{lemma}
}
\commenter{
Given these two lemmas the proof of the theorem is a simple argument with the Serre spectral sequence in 
homology of the fibration $\Om G \to \Om G/K \to K$.  The $E_2$ term of this spectral sequence is
$$
H_*(K; \F_p) \o H_*(\Om G; \F_p).
$$
Now $H_*(K; \F_p)$ is finite $H_*(\Om G; \F_p)$ has polynomial growth and the Serre spectral sequence shows that $H_*(\Om G/K; \F_p)$ has polynomial growth.  Since $G/K$ is a compact manifold $H_*(G/K; \F_p)$ has finite type and finite depth and it follows that $G/K$ is $\F_p$ elliptic. 
}

Now return to the list from [McCZ].  The $7$ examples of homogeneous spaces in this list not covered by Theorems 3.1 and 3.2 are $\F_p$ elliptic spaces for any prime $p$ by Theorem 5.1.   Furthermore, in each case, there is a prime $p$ such that the cohomology algebra of the homogeneous space cannot be generated by a single element.  Therefore by Theorem 1.2 any metric has an infinite number of geometrically distinct closed geodesics.

\bigskip
\centerline{\textsc{References}}

\medskip\noindent
[BaThZ81], W.~Ballman, G.~Thorbergsson, W.~Ziller, Closed geodesies and the fundamental group, Duke
Math. J. {\bf 48}(1981), 585--588.

\smallskip\noindent
[BngHi84] V.~Bangert, N.~Hingston, Closed geodesics on manifolds with infinite abelian fundamental group,
J. Differential Geom. {\bf 19}(1984), 277--282.

\smallskip\noindent
[B56], R.~Bott, On the iteration of closed geodesies and the Sturm intersection theory,
Comm.~Pure Appl.~Math. {\bf 9}(1956), 171--206.

\smallskip\noindent
[CS99] M.~Chas, D.~Sullivan, String Topology, preprint: {\tt math.GT/9911159}, 1999.

\smallskip\noindent
[CJ02] R.L.~Cohen, J.D.S.~Jones, A Homotopy Theoretic Realization Of String Topology, Math.~Ann,
{\bf 324} (2002), 773--798. Progr.~Math., 215, Birkh\"auser, Basel, 2004.

\smallskip\noindent
[CJY04] R.L.~Cohen, J.D.S.~Jones, J.~Yan, The loop homology algebra of spheres and projective spaces,
Categorical decomposition techniques in algebraic topology (Isle of Skye 2001)
Prog.~Math {\bf 215} (2004), 77--92.

\smallskip\noindent
[F00] Y.~F\'elix, Croissance polynomiale de certains espaces de lacets. 
Acad. Roy. Belg. Cl. Sci. M\'em. Collect. 8o (3) 17 (2000), 79 pp.

\smallskip\noindent
[FHT88] Y.~F\'elix, S.~Halperin, J.C.~Thomas, Gorenstein spaces, Adv. Math. {\bf 71}(1988), 92--112.

\smallskip\noindent
[FHLT89] Y.~F\'elix, S.~Halperin, J.-M.~Lemaire, J.-C.~Thomas, Mod p loop space homology, Inv.~Math. {\bf 95}(1989),
247--262.

\smallskip\noindent
[FHT91a]  Y.~F\'elix, S.~Halperin, J.C.~Thomas, Elliptic Hopf algebras, J.~London Math.~Soc. (2) {\bf 43}(1991), 545--555.

\smallskip\noindent
[FHT91b] Y.~F\'elix, S.~Halperin, J.-C.~Thomas, Elliptic spaces, Bull.~Amer.~Math.~Soc. {\bf 25}(1991), 69--73.

\smallskip\noindent
[FHT93] Y.~F\'elix, S.~Halperin, J.-C.~Thomas, Elliptic spaces. II, Enseign.~Math. {\bf 39}(1993), 25--32.

\smallskip\noindent
[FTV] Y.~F\'elix, J.C.~Thomas, M.~Vigu\'e-Poirrier, Rational string topology,
J.~Eur.~Math.~Soc. {\bf 9}(2007), 123--156.

\smallskip\noindent
[GM69] D.~Gromoll, W.~Meyer, Periodic geodesics on compact Riemannian manifolds, 
J.~Diff.~Geom. {\bf 3} (1969), 493--510.

\smallskip\noindent
[L11], F.~Laudenbach,  A Note on the Chas-Sullivan product. L'Enseignement Math. {\bf 57}(2011) 1--19.

\smallskip\noindent
[McC87] J.~McCleary, On the mod $p$ Betti numbers of loop spaces, Invent.~Math. {87} (1987), 643--654.

\smallskip\noindent
[McCZ] J.~McCleary, W.~Ziller, On the free loop space of homogeneous spaces. Amer.~J.~Math. {\bf 109}(1987), 765--781.
Corrections to: ``On the free loop space of homogeneous spaces'' Amer.~J.~Math. {\bf 113}(1991), 375--377.

\smallskip\noindent
[MM65] J.~W.~Milnor, J.~C.~Moore, On the structure of Hopf algebras, Ann.~of Math. (2) {\bf 81}(1965), 211--264.

\smallskip\noindent
[O66], A.~L.~Oni\v{s}\v{c}ik, Transitive compact transformation groups, Mat.~Sb. {\bf 60}(1963), 447--485 [Russian].  

\smallskip\noindent
[Sp67], Spivak, M., Spaces satisfying Poincar\'e duality. Topology {\bf 6}(1967) 77--101.

\smallskip\noindent
[V-PS76] M.~Vigu\'e-Poirrier, D.~Sullivan, The homology theory of the closed geodesic problem, 
J.~Diff.~Geom. {\bf 11} (1976), 633--644.

\end{document}